\documentclass[11pt,a4paper]{article}

\usepackage[T1]{fontenc}
\usepackage[utf8]{inputenc} 
\usepackage{lmodern}        
\usepackage{textcomp}       
\usepackage{microtype}
\usepackage[unicode]{hyperref}

\title{Space of Timelike Directions and Curvature Bounds}
\author{%
  Joe Barton\thanks{\texttt{joe.barton@univie.ac.at}} \and
  Jona R\"{o}hrig\thanks{%
    \texttt{jona.roehrig@univie.ac.at}
  }%
}

\usepackage{amsmath,bm,amsfonts,amstext,amsxtra,amssymb,amsthm,mathtools} 
\usepackage{tensor}

\usepackage{hyperref}
\usepackage{wrapfig}
\usepackage{graphics}
\usepackage{graphicx}
\usepackage{subfiles}
\usepackage{MnSymbol}
\usepackage{graphbox}
\usepackage{witharrows}
\usepackage{xcolor}
\usepackage{tikz}
\usetikzlibrary{tikzmark}
\usetikzlibrary{angles,quotes}
\usepackage{pgfplots} 
\usetikzlibrary{intersections, calc}
\usepgfplotslibrary{fillbetween}
\usetikzlibrary{patterns}
\pgfkeys{/pgf/declare function={arcsinh(\x) = ln(\x + sqrt(\x^2+1));}}
\allowdisplaybreaks

\usepackage[utf8]{inputenc} 
\usepackage[T1]{fontenc}

\usepackage[english]{babel} 

\usepackage[margin=1.5in]{geometry}

\usepackage{tikz}
\usetikzlibrary{matrix,quotes,calc,patterns,math,shapes}

\usepackage{float} 
\usepackage{verbatim}
\usepackage{thmtools}
\usepackage{caption}
\usepackage{subcaption}
\usepackage{wrapfig}
\usepackage{graphicx} 
\usepackage{enumitem}
\usepackage{mathrsfs}
\usepackage{stmaryrd}
\usepackage{lipsum}
\usepackage{bbm}

\usepackage{media9}
\definecolor{green}{HTML}{007F00}
\definecolor{red}{HTML}{7f0f0f}
\usepackage[makeroom]{cancel}
 \usepackage[T1]{fontenc}
  \usepackage{tikz-cd}

\usepackage{ifthen}
\usepackage{witharrows}
\usepackage{biblatex}
\usepackage{makeidx}
\makeindex
\usepackage[export]{adjustbox}
\hypersetup{
    colorlinks=true, 
    breaklinks=true, 
    citecolor=red, 
    linkcolor=black, 
    menucolor=red, 
    urlcolor=blue,
    bookmarksopen=false, 
    bookmarksopenlevel=0,
    pdftitle={Complete Minimal Surfaces and How to Print Them},
    pdfsubject={Complete Minimal Surfaces and How to Print Them},
    plainpages=false,
    hypertexnames=false,
}

\newcommand{\R}{\mathbb{R}} 

\newcommand{\dd}{\text{d}} 

\newtheorem{definition}{Definition}[section]

\newtheorem{example}[definition]{Example}

\newtheorem{lemma}[definition]{Lemma}

\newtheorem{theorem}[definition]{Theorem}

\theoremstyle{plain} 
\newtheorem{remark}{Remark}

\newcommand\defindex[2][]{\textbf{#2}\ifthenelse{\equal{#1}{}}{\index{#2}}{\index{#1}}} 

\newcommand{\eqval}{\; \Leftrightarrow \;}
\newcommand{\imply}{\; \Rightarrow \;}

\newcommand{\mDef}{\coloneqq }

\newcommand\identity{1\kern-0.25em\text{l}}
\DeclareMathOperator{\arcosh}{arcosh}

\newcommand{\LLS}{Lorentzian length space}

\newcommand{\LpLS}{Lorentzian pre-length space}

\newcommand{\Xll}{$(X,d,\ll,\leq,\tau)$ }

\usepackage{amsthm}
\usepackage{thmtools}
\usepackage{cleveref}

\pgfplotsset{compat=1.18} 
\begin{document}

\maketitle

\begin{abstract}
We investigate the consequences of timelike sectional curvature bounds in Lorentzian length spaces for the existence and structure of the space of directions at a point. It is established that, under upper timelike sectional curvature bounds, the space of directions exists and is itself a metric space with curvature bounded above by $-1$. Furthermore, the metric cone over the space of directions, which canonically models the tangent space at a given point, is shown to constitute a Lorentzian length space with timelike sectional curvature bounded above by $0$. To do this, we introduce the notion of $\epsilon$-$\mu$ timelike sectional curvature bounds, which are compatible with pre-existing synthetic curvature conditions \cite{secCurvBouns}. These results extend the comparison-geometric framework to the Lorentzian setting, providing a synthetic characterization of geodesics, tangent cones, and curvature under causal constraints.
\end{abstract}

\tableofcontents

\section{Introduction}

In Riemannian geometry, the tangent plane at a point plays an important role in defining curvature via second-order approximations. In metric geometry, this notion of a tangent plane does not exist; however, with an adapted notion of a tangent plane, one should obtain similar structures. The synthetic \emph{analogue} in metric spaces is the space of directions, formed from equivalence classes of geodesics starting at a point; and the tangent cone, the cone over the space of directions, which models the local geometry as a blow-up around a point.

In 2018, Kunzinger and S\"amann \cite{KS:18} introduced \emph{\LpLS s} (LpLS), a Lorentzian version of a metric space. This synthetic Lorentzian geometry provides tools to study non-smooth Lorentzian spaces without relying on a differentiable structure, enabling comparison-based characterisations of geodesics, tangent cones and curvature in settings where smooth techniques may fail \cite{KS:18}. Recent synthetic approaches adapt comparison geometry from the metric to the Lorentzian context, aiming to obtain curvature and singularity results in a purely Lorentzian metric framework. \cite{barton2026splittingtheoremnonpositivelycurved}
\cite{alexander2021generalizedconeslorentzianlength}

The aim of this paper is to investigate the structure of tangent cones and their curvature in Lorentzian pre-length spaces and to understand how curvature bounds behave under this blow-up process. In the metric setting, there are results relating metric spaces with curvature bounded above to their spaces of directions and tangent cones, for example: if $X$ is a complete locally compact length space with curvature bounded above, then at each point $p \in X$, the tangent cone $T_p$ is a length space of non-positive curvature, and the space of directions at $p$ is a space of curvature $\leq 1$ \cite[Theorem 9.1.44]{burago2001course}. We present results that relate the behaviour of upper timelike curvature bounds in Lorentzian pre-length spaces to their tangent cones and spaces of directions.

In Section 2, we introduce $\epsilon$–$\mu$ midpoints, a notion of curvature bounds for non-geodesic Lorentzian length spaces that, under mild assumptions, are compatible with the four-point condition and triangle based bounds when geodesics exist.

In Section 3, we introduce the space of timelike directions and show in \Cref{them:SoD_lengthspace} that if $X$ is a Lorentzian length space with timelike sectional curvature bounded above, then the space of directions is a length space and the tangent cone is a Lorentzian pre-length space.

In Section 4, we prove that under upper timelike sectional curvature bounds, the future tangent cone $T_p^+$ satisfies a four-point curvature bound $\leq 0$ (hence it is a Lorentzian length space with non-positive timelike curvature in the comparison sense). Via a cone–base correspondence, this implies, in our main \Cref{thm:SoDcba-1} that the space of directions $\Sigma_p^+$ has curvature bounded above by $-1$ and it is in fact geodesic if the Lorentzian pre-length space has any upper timelike sectional curvature bound. 

These results extend classical comparison statements about spaces of directions and tangent cones to the Lorentzian setting \cite{burago2001course}.

\textbf{Acknowledgements:}
We would like to thank Clemens Sämann and Tobias Beran for their helpful discussions in the initial stages of this project. 

The authors were funded by the Austrian Science Fund (FWF) [Grant DOI's: 10.55776/EFP6]. 

For open access purposes, the authors have applied a CC BY public copyright license to any author-accepted manuscript version arising from this submission.

\section{Sectional Curvature Bounds}
In this section, we review several notions of timelike sectional curvature bounds for Lorentzian pre-length spaces, which will be needed in the later discussion. In addition, we introduce a new notion of curvature bounds, the \(\epsilon\)-\(\mu\) curvature bound, which does not require the local existence of geodesics. A good overview of various notions of timelike sectional curvature bounds can be found in \cite{secCurvBouns}.\\
The theory of synthetic timelike sectional curvature bounds is built upon the notion of Lorentzian metric spaces, such as \LLS s and \LpLS s. We will not repeat all necessary definitions here; instead, we refer the reader to \cite{KS:18} and only recall the most fundamental object, namely the \LpLS:

\begin{definition}
A \textbf{Lorentzian pre-length space} $(X,\dd,\ll, \leq, \tau)$ consists of a set $X$, a metric $\dd$ (positive definite, symmetric, and satisfying the triangle inequality), two relations $\ll, \leq\ \subset X\times X$ and a function $\tau: X\times X \to [ 0,\infty]$, such that: \\
$ 
\begin{array}{ll}
\begin{array}{l}
\bullet\ \leq \text{ is reflexive and transitive (a pre-order)} \\
\bullet\ \ll \text{ is transitive and contained in }\leq
\end{array}
& \!\!\!\!\bigg\}\ (X,\ll,\leq) \!\text{ is causal space}\\
\begin{array}{l}
\bullet\ \ \tau \text{ is lower semi-continuous w.r.t d}\\
\bullet\ \ \tau(x,z) \geq \tau(x,y) + \tau(y,z) \ \ \ \forall\ x\leq y \leq z \in X\\
\bullet\ \ x\nleq y \imply \tau(x,y)=0 \text{ and } \tau(x,y)>0 \eqval x\ll y
\end{array}
&\\
\end{array}
$
\end{definition}
\subsection{Previous notions of curvature bounds}
All definitions of sectional curvature are based on the fundamental idea of comparing time separations in a Lorentzian length space to time separations in 2-dimensional spaces of constant sectional curvature $K$, $\mathbb{L}^2_K$, called the $K$ - comparison spaces. For $K<0$, the comparison space is a scaled anti de-Sitter space; for $K=0$, Minkowski space; and for $K>0$, a scaled de-Sitter space. These spaces have diameter bounds $D_K = \frac{\pi}{\sqrt{-K}}$ for $K<0$ and $D_K = \infty$ for $K \geq 0$.
\begin{definition}
    Let \Xll be a \LpLS \, and $K\in \R$. For any three causally related points $x\leq y\leq z\in X$ such that $\tau(x,z)< D_K$, we can construct its \textbf{comparison triangle} $\triangle\bar x\bar y\bar z\subset \mathbb{L}^2_K$ in the 2-dimensional Lorentzian space of constant curvature $K$ such that all the side lengths agree:
    $$\tau(x,y) = \tau^{\mathbb{L}^2_K}(\bar x, \bar y), \quad \tau(x,z) = \tau^{\mathbb{L}^2_K}(\bar x, \bar z), \quad \tau(y,z) = \tau^{\mathbb{L}^2_K}(\bar y, \bar z).$$
    If the geodesic $\gamma_{xy}$ between $x$ and $y$ exists in $X$ (or any other combination of points), the comparison point of a point $m\in \gamma_{xy}$ is a point $\bar m\in \mathbb{L}^2_K$ on $\gamma_{\bar x \bar y}$ such that
    $$\tau(x,m) = \tau^{\mathbb{L}^2_K}(\bar x, \bar m) \quad \text{and} \quad \tau(m,y) = \tau^{\mathbb{L}^2_K}(\bar m, \bar y).$$
\end{definition} 
The one-sided triangle comparison defines curvature bounds by comparing distances in timelike triangles. 

\begin{definition}\label{def:oneSidedLorTriangle}
\textbf{One-sided triangle comparison} \cite[Def. 3.2]{secCurvBouns} \\ 
A \LpLS \, \Xll has an upper (or lower) curvature bound $K\in \R$ if it can be covered by comparison neighbourhoods $U\subset X$ which satisfy:
\begin{enumerate}
    \item $\tau$ is continuous on $(U\times U)\cap \tau^{-1}\bigl([0,D_K)\bigr)$,
    \item All points $x\leq y\in U$ can be connected by a distance realizer contained in $U$.
    \item For any causal triangle $\triangle xyz \subset U$, any point on one of the distance realizers of the triangle (for example, $m\in \gamma_{yz}$), and any comparison triangle $\triangle \bar x\bar y\bar z\subset \mathbb{L}^2_K$ with $\bar m\in \gamma_{\bar y \bar z}$, we have:
\end{enumerate}
\begin{equation*}
    \tau(x,m) \geq \tau_{\mathbb{L}^2_K}(\bar x, \bar m) 
    \quad 
    \bigl(\text{respectively, } \tau(x,m) \leq \tau_{\mathbb{L}^2_K}(\bar x, \bar m)\bigr).
\end{equation*}
\end{definition}

A different approach to defining curvature bounds uses angles and comparison angles in Lorentzian length spaces. The idea of a comparison angle is to transfer the side lengths of a triangle to a comparison space and calculate the hyperbolic angle between the geodesics connecting the vertices in that space:

\begin{definition} \textbf{\boldmath{$K$}-comparison angles} 
\cite[Def. 2.1]{HyperbolicAnglesBeran_2023} \\
Let \Xll be a \LpLS \, and $K\in \R$. For any three causally related points $x\leq y\leq z\in X$ such that $\tau(x,z)<D_K$ for $K>0$, the \textbf{\boldmath{$K$}-comparison angle} is defined as
\begin{equation*}
    \tilde{\measuredangle}_{x}^Kyz \mDef
     \measuredangle^{\mathbb{L}^2_K}_{\bar x }\bar y \bar z.
\end{equation*}
Here, $\measuredangle^{\mathbb{L}^2_K}_{\bar x}\bar y \bar z$ denotes the angle in the comparison triangle in $\mathbb{L}^2_K$ at $\bar x$ between the geodesics connecting $\bar x\bar y$ and $\bar x\bar z$. The angles at $\bar y$ and $\bar z$ are defined analogously. The quantity $\tilde{\measuredangle}_{x }^K yz$ is understood to be well-defined for any causal ordering of $x,y,z$. 

The \textbf{signed $\bm{K}$-comparison angle} is defined as
\[
\tilde{\measuredangle}_{x}^{K,S}yz \mDef \sigma\, \tilde{\measuredangle}_{x}^K yz,
\]
where the sign $\sigma=-1$ if $x$ is the most future or most past point, and $\sigma=1$ otherwise.
\end{definition}

\begin{remark}\label{rem:comparisonAngle}
The comparison angle is a function that depends only on $K$ and the three side lengths 
$a \mDef \tau(x,y)$, $b \mDef \tau(y,z)$, and $c \mDef \tau(x,z)$. Let $s \mDef \frac{1}{\sqrt{|K|}}$, then $K$-comparison angle is:
\begin{align*}
    \tilde{\measuredangle}_{x}^K yz &= \text{arccosh}\left(\frac{c^2 - a^2 + b^2}{2ab}\right) && \text{for } K = 0, \\
    \tilde{\measuredangle}_{x}^Kyz &= \text{arccosh}\left(\frac{-\cos(sa)\cos(sb) + \cos(sc)}{\sin(sa)\sin(sb)}\right) && \text{for } K < 0, \\
    \tilde{\measuredangle}_{x}^Kyz &= \text{arccosh}\left(\frac{\cosh(sa)\cosh(sb) - \cosh(sc)}{\sinh(sa)\sinh(sb)}\right) && \text{for } K > 0.
\end{align*}
\end{remark}

We can use the comparison angle for three points to define the angle between two causal curves (especially geodesics) starting from the same point:

\begin{definition} \textbf{Angles between curves} \cite[Def. 2.9]{HyperbolicAnglesBeran_2023}

Let \Xll be a \LpLS and let $\gamma_1,\gamma_2:[0,\epsilon)\to X$ be two causal curves starting at $p=\gamma_1(0)=\gamma_2(0)$ (each curve may point in either the future or past direction). 
The angle, or signed angle (with a negative sign if the time directions of the two curves agree), between $\gamma_1$ and $\gamma_2$ is defined as
\begin{equation*}
    \measuredangle_p\bigl(\gamma_1, \gamma_2\bigr) \mDef \limsup_{(t,s)\in A_K,t,s\to 0}\,\tilde{\measuredangle}^K_{p}\gamma_1(t)\gamma_2(s)
    \qquad \text{for } K\in \R,
\end{equation*}
 where $A_K$ is the set of all $t$ and $s$ such that $\triangle p \gamma_1(t)\gamma_2(s)$ is a timelike triangle satisfying the size bound for $\mathbb{L}^2_K$.
\end{definition}

\begin{remark}
The $\limsup$ in the angle definition is not necessarily a $\lim$, not even between a curve and itself. If the limit in the definition above exists, it is independent of the choice of $K\in \R$. This can be verified using the explicit formula for the comparison angles in \Cref{rem:comparisonAngle}.
\end{remark}
We now present two similar definitions for sectional curvature bounds, the \textbf{monotonicity condition} and the \textbf{angle comparison condition}. The definitions can be found in 3.8, 3.10 and 3.11 of \cite{secCurvBouns}. 

\begin{definition}\label{def:MonotonLorCurvature}
\textbf{Monotonicity comparison} \\
A \LpLS \, \Xll has an upper (or lower) curvature bound $K\in \R$ if it is covered by comparison neighbourhoods $U\subset X$ which satisfy:
\begin{enumerate}
    \item $\tau$ is continuous on $(U\times U)\cap \tau^{-1}\bigl([0,D_K)\bigr)$.
    \item All points $x\leq y\in U$ can be connected by a distance realizer contained in $U$.
    \item For any timelike distance realizers $\gamma_1,\gamma_2\in U$ starting from the same point $p$, the function $\theta(s,t)\mDef \tilde{\measuredangle}^{K,S}_{p}\gamma_1(t)\gamma_2(s)$ is monotonically decreasing (resp. increasing) for all $(t,s)\in A_K$.
\end{enumerate}
\end{definition}
\begin{remark}
    We note that this definition is for the signed angles. In the case we will consider, both curves are future pointing, and the signed angle is therefore negative. This means that the unsigned angles behave exactly the opposite way. 
\end{remark}
\begin{remark} \label{rem:anglecomp}
The definitions show that the angles between geodesics always exist if the space has either an upper or a lower curvature bound. Since the angle between two curves is defined via the limit supremum of $\theta$, upper curvature bounds imply that
\[
\measuredangle_p^S\bigl(\gamma_1,\gamma_2\bigr)\,\geq\, \tilde{\measuredangle}^{K,S}_{p}\gamma_1(t)\gamma_2(s),
\]
while lower curvature bounds imply
\[
\measuredangle_p^S\bigl(\gamma_1,\gamma_2\bigr)\,\leq\, \tilde{\measuredangle}^{K,S}_{p}\gamma_1(t)\gamma_2(s).
\]
\end{remark}

We now introduce the four point condition, a notion of sectional curvature bounds that requires the least amount of regularity on our spaces, since it does not rely on the local existence of geodesics. Instead, it relies solely on comparing the distances between four points with the corresponding distances in the comparison space.

\begin{definition}\textbf{Four point condition} \cite[Def. 4.6]{secCurvBouns}, \cite[Def. 4.1]{beran2025reshetnyakmajorisationdiscreteupper}
\label{def:LorfourPts} \\
A \LpLS \, \Xll has \textbf{curvature bounded below} by $K\in \R$, if it can be covered by neighbourhoods $U\subset X$ such that:
\begin{enumerate}
\item $\tau$ is continuous on $(U\times U)\cap\tau^{-1}([0,D_K)])$ and this set is open.
\item For any four points $x_1\ll x_2\ll x_3\leq x_4\subset U$, after constructing the comparison four point picture below in $\mathbb{L}^2_K$, we find that $\tau(x_2,x_3)\geq \tau(\tilde{x}_2,\tilde{x}_3)$.
\end{enumerate}

\begin{center}
\begin{tikzpicture}[scale=3]
\coordinate (A1) at (0,0);
\coordinate (B1) at (0,0.5);
\coordinate (C1) at (-0.2,1);
\coordinate (D1) at (0.3,1.4);
\draw (A1) -- (B1) -- (D1) -- cycle;
\draw (A1) -- (B1) -- (C1) -- cycle;
\draw[dashed, red] (C1) -- (D1);
\node[fill=black, inner sep=1pt, circle, label=left:$\tilde{x}_1$] at (A1) {};
\node[fill=black, inner sep=1pt, circle, label=above:$\tilde{x}_2$] at (B1) {};
\node[fill=black, inner sep=1pt, circle, label=left:$\tilde{x}_3$] at (C1) {};
\node[fill=black, inner sep=1pt, circle, label=right:$\tilde{x}_4$] at (D1) {};

\end{tikzpicture}
\end{center}

A \LpLS \Xll has \textbf{curvature bounded above} by $K\in \R$, if it can be covered by neighbourhoods $U\subset X$ such that:
\begin{enumerate}
\item $\tau$ is continuous on $(U\times U)\cap\tau^{-1}([0,D_K)])$ and this set is open.
\item For any four points $x_1\ll x_2\ll x_3\ll x_4\subset U$, after constructing the two comparison four point pictures below in $\mathbb{L}^2_K$, we find that $\tau(x_2,x_3)\geq \tau(\tilde{x}_2^1,\tilde{x}_3^1)$ and $\tau(x_1,x_4)\leq \tau(\tilde{x}_1^2,\tilde{x}_4^2)$.
\end{enumerate}
\begin{center}
\begin{tikzpicture}[scale=3]
\coordinate (A1) at (0,0);
\coordinate (B1) at (-0.1,0.5);
\coordinate (C1) at (0.2,1);
\coordinate (D1) at (0.1,1.4);
\draw (A1) -- (B1) -- (D1) -- (C1) -- cycle;
\draw[dashed, red] (B1) -- (C1);
\draw (A1) -- (D1);
\node[fill=black, inner sep=1pt, circle, label=left:$\tilde{x}_1^1$] at (A1) {};
\node[fill=black, inner sep=1pt, circle, label=left:$\tilde{x}_2^1$] at (B1) {};
\node[fill=black, inner sep=1pt, circle, label=right:$\tilde{x}_3^1$] at (C1) {};
\node[fill=black, inner sep=1pt, circle, label=left:$\tilde{x}_4^1$] at (D1) {};

\coordinate (A2) at (1,0);
\coordinate (B2) at (0.9,0.5);
\coordinate (C2) at (0.9,1);
\coordinate (D2) at (1.1,1.4);
\draw (A2) -- (B2) -- (C2) -- (D2);
\draw (B2) -- (D2);
\draw (A2) -- (C2);
\draw[dashed, red] (A2) -- (D2);
\node[fill=black, inner sep=1pt, circle, label=right:$\tilde{x}_1^2$] at (A2) {};
\node[fill=black, inner sep=1pt, circle, label=left:$\tilde{x}_2^2$] at (B2) {};
\node[fill=black, inner sep=1pt, circle, label=left:$\tilde{x}_3^2$] at (C2) {};
\node[fill=black, inner sep=1pt, circle, label=right:$\tilde{x}_4^2$] at (D2) {};
\end{tikzpicture}
\end{center}
\end{definition}

\subsection{\texorpdfstring{$\epsilon$–$\mu$}{epsilon-mu} sectional curvature bounds}
Next, we introduce a new notion of curvature that lies between the triangle comparison and the four point condition. The space must be a length space but does not need to be locally geodesic. Instead of comparing distances to the midpoints of geodesics (which do not necessarily exist), we compare distances to points that are almost in between two other points. This notion of curvature (for non-geodesic spaces) integrates better with the (Riemannian) length space setting and is inspired by \cite[Def. 9.1.2]{burago2001course} and \cite[Def. 2.2]{beran2023characterizingintrinsiclorentzianlength}. First, we define $\epsilon$-$\mu$-midpoints.

\begin{definition} 
Let \Xll be a \LpLS,
    a \\ \textbf{\boldmath{$\mu$}-midpoint} of two points $a\ll b \in X$ for $\mu \in (0,1)$ is a point $m\in X$ such that 
    \begin{equation*}
        \tau(a,m)=\mu \tau(a,b) \qquad\text{and}\qquad  \tau(m,b)=(1-\mu) \tau(a,b). 
    \end{equation*}
     An \textbf{\boldmath{$\epsilon$-$\mu$}-midpoint} of two points $a\ll b \in X$ for $\mu \in (0,1)$ and $\epsilon>0$  is a point $m\in X$ such that 
    \begin{equation*}
        |\tau(a,m)-\mu \tau(a,b)| <\epsilon \qquad\text{and}\qquad  |\tau(m,b) - (1-\mu) \tau(a,b)| <\epsilon.
    \end{equation*}
     An \textbf{\boldmath{$\epsilon$}-midpoint} of two points $a\ll b \in X$ is a point $m\in X$ such that 
    \begin{equation*}
        |\tau(a,m)-\frac{1}{2} \tau(a,b)| <\epsilon \qquad\text{and}\qquad  |\tau(m,b) - \frac{1}{2} \tau(a,b)| <\epsilon.
    \end{equation*}
    We note that an $\epsilon$-midpoint is just an $\epsilon$-$1/2$-midpoint and all the definitions above can be analogously defined for a metric space, replacing $\tau$ with $\dd$.
\end{definition}

\begin{definition}\label{def:epsmuCBA}
    \textbf{One-sided $\epsilon$-$\mu$ triangle curvature condition}\\
    A \LpLS \, \Xll has curvature bounded below (respectively, above) by $K \in \R$ if and only if $X$ can be covered by open neighbourhoods $U \subset X$ such that:  
    \begin{enumerate}
        \item The time separation function $\tau$ is continuous on $(U \times U) \cap \tau^{-1}([0, D_K))$ and this set is open.
        \item For any timelike (not necessarily geodesic) triangle $\triangle abc \subset U$ and any $\mu \in (0,1)$, there exists a function $f_\mu : \R_{\ge 0} \to \R_{\ge 0}$ with $\lim_{\epsilon \to 0} f_\mu(\epsilon) = 0$ such that the following holds.
        Let $a$ and $b$ be any two of the vertices of $\triangle abc$; then for all
        $\epsilon$-$\mu$ midpoints $m_{\epsilon\mu}$ of $a$ and $b$ which are also timelike related to the remaining vertex $c$, the inequalities 
        \begin{align*}
            &\tau(c, m_{\epsilon\mu}) \ge \tau(\bar c, \bar m_\mu) - f_\mu(\epsilon) \quad \text{and} \quad
            \tau(m_{\epsilon\mu}, c) \ge \tau(\bar m_\mu, \bar c) - f_\mu(\epsilon), \quad  \\
            &\left( \tau(c, m_{\epsilon\mu}) \le \tau(\bar c, \bar m_\mu) + f_\mu(\epsilon) \quad \text{and} \quad
              \tau(m_{\epsilon\mu}, c) \le \tau(\bar m_\mu, \bar c) + f_\mu(\epsilon)\right), 
        \end{align*}
        hold, where $\bar m_\mu$ is the $\mu$-midpoint in the corresponding comparison space.
    \end{enumerate}
\end{definition}

\begin{lemma}
\label{lem:curvature_equivalence}
The following conditions are equivalent for a strongly causal and regular \LpLS \,  \Xll:
\begin{enumerate}
    \item \Xll has a timelike upper curvature bound $K$ in the sense of the triangle condition (Definition~\ref{def:oneSidedLorTriangle}).\label{enum:curvature1}
    \item \Xll has a timelike upper curvature bound $K$ in the sense of the four-point condition (Definition~\ref{def:LorfourPts}), and locally every pair of timelike-related points can be connected by a distance realizer.\label{enum:curvature2}
    \item \Xll has a timelike upper curvature bound $K$ in the sense of the one-sided $\epsilon$-$\mu$ triangle condition (Definition~\ref{def:epsmuCBA}), and locally every pair of timelike-related points can be connected by a distance realizer.\label{enum:curvature3}
\end{enumerate}
\end{lemma}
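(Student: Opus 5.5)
The plan is to prove $(1)\Leftrightarrow(2)$ by appealing to the existing equivalence results of \cite{secCurvBouns}, and to establish $(1)\Rightarrow(3)\Rightarrow(1)$ directly. For $(1)\Leftrightarrow(2)$ I would cite \cite[Thm.~4.8/5.1]{secCurvBouns}. That result shows that, for strongly causal, locally causally closed, regular Lorentzian pre-length spaces that are locally geodesic, the triangle comparison and the four-point condition agree. The geodesic hypothesis in (2) is exactly what is needed to invoke it. Condition (1) already contains local geodesic connectedness through Definition~\ref{def:oneSidedLorTriangle}(2), so the forward direction only requires checking that the continuity and openness conditions match. Here strong causality is used to shrink $U$ so that $(U\times U)\cap\tau^{-1}([0,D_K))$ is open.

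\textbf{$(3)\Rightarrow(1)$.} Fix a comparison neighbourhood $U$ from (3), a timelike triangle $\triangle abc\subset U$, and a point $m$ on a distance realizer $\gamma_{ab}\subset U$. Such a realizer exists by hypothesis after shrinking $U$, and we may take $m$ in its interior. Set $\mu=\tau(a,m)/\tau(a,b)$. Since $\gamma_{ab}$ is a realizer, $m$ is an exact $\mu$-midpoint, and hence an $\epsilon$-$\mu$-midpoint for every $\epsilon>0$. Provided $m$ is timelike related to $c$, we obtain $\tau(c,m)\le\tau(\bar c,\bar m_\mu)+f_\mu(\epsilon)$ for every $\epsilon$. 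Letting $\epsilon\to0$ gives the triangle inequality of Definition~\ref{def:oneSidedLorTriangle}.

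The case where $m$ is only causally related, or unrelated, to $c$ needs separate treatment. I would approximate $m$ along $\gamma_{ab}$ by points $m_n$ that are timelike related to $c$, using the openness of $\ll$ together with continuity of $\tau$. I would then pass to the limit, using continuity of $\tau$ on $U$ and continuity of the comparison quantity in $\mu$. Degenerate triangles and the endpoint cases $\mu\in\{0,1\}$ are trivial.

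\textbf{$(1)\Rightarrow(3)$.} Here the given point is only an $\epsilon$-$\mu$-midpoint $m_{\epsilon\mu}$, and we must bound $\tau(c,m_{\epsilon\mu})$ from above. This is the main obstacle, because $m_{\epsilon\mu}$ need not lie on any realizer. I would argue by contradiction, in the spirit of \cite[Lemma~9.1.?]{burago2001course} and using the compactness available from strong causality and regularity. Suppose $f_\mu$ cannot be chosen tending to $0$. Then there exist $\epsilon_n\to0$ and $\epsilon_n$-$\mu$-midpoints $m_n$ with $\tau(c,m_n)\ge\tau(\bar c,\bar m_\mu)+\delta$. The $m_n$ lie in the causal diamond $J(a,b)$, which after shrinking $U$ can be taken compact, as in a localizing neighbourhood. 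Passing to a subsequence, $m_n\to m$. Continuity of $\tau$ then makes $m$ an exact $\mu$-midpoint of $a,b$. Concatenating realizers from $a$ to $m$ and from $m$ to $b$ gives a realizer of $a,b$ through $m$, since lengths add to $\tau(a,b)$. Condition (1) therefore gives $\tau(c,m)\le\tau(\bar c,\bar m_\mu)$, while continuity of $\tau$ gives $\tau(c,m)\ge\tau(\bar c,\bar m_\mu)+\delta$, a contradiction. Defining $f_\mu(\epsilon)$ as the supremum of the excess over all $\epsilon$-$\mu$-midpoints then yields a function tending to $0$. The points needing care are the compactness of $J(a,b)\cap U$, which I would obtain from strong causality and a neighbourhood basis of causally convex sets, and ensuring that the limit stays in the region where $\tau$ is continuous. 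Both should follow from the openness condition in (1) of Definition~\ref{def:epsmuCBA}.
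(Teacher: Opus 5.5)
Your route differs from the paper's, which imports only (1)$\Rightarrow$(2) (Prop.~4.4 of \cite{beran2025reshetnyakmajorisationdiscreteupper}), proves (2)$\Rightarrow$(3) directly, and closes with (3)$\Rightarrow$(1). The step you use in its place, (1)$\Rightarrow$(3), has a genuine gap. Your contradiction argument needs the $\epsilon_n$-$\mu$-midpoints $m_n$ of $a,b$ to have a subsequence converging to a point of $U$ where $\tau$ is continuous. Nothing in ``strongly causal and regular'' provides this.

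Strong causality does give causally convex neighbourhoods: timelike diamonds $I(p,q)$ are causally convex by push-up. So after shrinking $U$ you can ensure $m_n\in I(a,b)\subset U$. But it does not make $J(a,b)$ or $\overline{I(a,b)}$ compact; that would need local compactness and some causal closedness, which the lemma does not assume. The openness condition in Definition~\ref{def:epsmuCBA}(1) does not help either. Openness of $(U\times U)\cap\tau^{-1}([0,D_K))$ says nothing about compactness, and in this direction it is something you must establish, not use. Nor is there a Cauchy argument in $X$ to fall back on: $\tau$ gives no control over $\dd(m_n,m_k)$, since near-midpoints of $a,b$ need not be causally related to each other.

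Second, in your plan the converse (2)$\Rightarrow$(1) rests only on a citation, and it is the direction the paper actually has to work for. The theorem you quote from \cite{secCurvBouns} is what the paper uses for the \emph{lower}-bound version of this lemma. The upper four-point condition of Definition~\ref{def:LorfourPts} comes from \cite{beran2025reshetnyakmajorisationdiscreteupper}, and the paper takes only (1)$\Rightarrow$(2) from there. You also add local causal closedness, which is not assumed here.

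The missing idea, which removes both problems, is the paper's direct (2)$\Rightarrow$(3):
\begin{enumerate}
\item Apply the four-point condition to the configuration $a\ll b\ll m_{\epsilon\mu}\ll c$ itself. Glue comparison triangles of $\triangle a b m_{\epsilon\mu}$ and $\triangle b m_{\epsilon\mu} c$ along $\bar b\bar m_{\epsilon\mu}$ to get $\tau(a,c)\le\tau(\bar a,\bar c)$. The other configuration of Definition~\ref{def:LorfourPts} handles midpoints on the longest side.
\item Use hinge monotonicity of the Lorentzian law of cosines to move $\bar a$ to $\bar a'$ with $\tau(\bar a',\bar c)=\tau(a,c)$. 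Then $\triangle\bar a'\bar b\bar c$ is a genuine comparison triangle and $\tau(a,m_{\epsilon\mu})\ge\tau(\bar a',\bar m_{\epsilon\mu})$.
\item Let $\epsilon\to0$ inside $\mathbb{L}^2_K$, where the $\epsilon$-$\mu$-midpoints of $\bar b\bar c$ shrink to $\bar m_\mu$.
\end{enumerate}
This needs no compactness and not even geodesics in $X$.

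Finally, reverse your inequalities. By Definition~\ref{def:oneSidedLorTriangle}, an upper timelike bound means $\tau(c,m)\ge\tau(\bar c,\bar m)$. Definition~\ref{def:epsmuCBA} has \emph{below} and \emph{above} swapped, and the paper's proof uses $\tau(c,m_{\epsilon\mu})\ge\tau(\bar c,\bar m_\mu)-f_\mu(\epsilon)$. With that correction, your (3)$\Rightarrow$(1) is the paper's argument, slightly streamlined, since $m$ itself is an $\epsilon$-$\mu$-midpoint for every $\epsilon$.
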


\begin{proof}
\textbf{\ref{enum:curvature1} \(\Rightarrow\) \ref{enum:curvature2}:} This implication has been proven in Proposition 4.4 of \cite{beran2025reshetnyakmajorisationdiscreteupper}.

\textbf{\ref{enum:curvature2} $\Rightarrow$ \ref{enum:curvature3}:} 
We distinguish two cases. In the first, we assume that the $\epsilon$-$\mu$ midpoint lies almost on one of the two shorter sides. However, these are equivalent by reversing the time orientation, so without loss of generality, we assume that it lies almost on the longer of the two shortest sides.\\
Given $a \ll b \ll c \in U$ in some four-point curvature neighbourhood $U \subset X$, fix $\mu \in (0,1)$ and consider $\epsilon$-$\mu$ midpoints $m_{\epsilon\mu}$ between $b$ and $c$.
Consider the two triangles $\triangle a b m_{\epsilon\mu}$ and $\triangle b m_{\epsilon\mu} c$ and construct their comparison triangles $\triangle \bar a \bar b \bar m_{\epsilon\mu}$ and $\triangle \bar b \bar m_{\epsilon\mu} \bar c \subset \mathbb{L}^2_K$ such that they are glued along their common edge $\bar b \bar m_{\epsilon\mu}$ in the same direction.

\begin{center}
\begin{tikzpicture}[scale=2.6, >=Stealth]
\coordinate (a) at (0, 0);
\coordinate (b) at (-0.4, 0.8);
\coordinate (m) at (-0.02, 1.2); 
\coordinate (c) at (0.3, 1.7);

\fill (a) circle (0.5pt) node[right] {$a$};
\fill (b) circle (0.5pt) node[below left] {$b$};
\fill (m) circle (0.5pt) node[right] {$m_{\epsilon\mu}$};
\fill (c) circle (0.5pt) node[above right] {$c$};

\draw[thick,red] (a) -- (b) -- (m) -- cycle; 
\draw[thick, green] (b) -- (m) -- (c) -- cycle;          
\draw [dashed] (a) -- (c);

\coordinate (A) at (0.4,0.9);
\coordinate (B) at (1.3,0.9);
\draw[->, thick, bend left] (A) to node[midway, above] {$\triangle a b m_{\epsilon\mu}$} node[midway, below] {$\triangle b m_{\epsilon\mu} c$} (B);

\coordinate (ab) at (2, 0);
\coordinate (bb) at (1.6, 0.8);
\coordinate (mb) at (1.85, 1.2); 
\coordinate (cb) at (2.3, 1.7);

\fill (ab) circle (0.5pt) node[right] {$\bar a$};
\fill (bb) circle (0.5pt) node[below left] {$\bar b$};
\fill (mb) circle (0.5pt) node[above left] {$\bar m_{\epsilon\mu}$};
\fill (cb) circle (0.5pt) node[above right] {$\bar c$};

\draw[thick, red] (ab) -- (bb) -- (mb) -- cycle; 
\draw[thick, green] (bb) -- (mb) -- (cb) -- cycle;          
\draw [dashed] (ab) -- (cb);

\draw[blue] pic["$\varphi$", draw=black,blue, angle radius=0.4cm, angle eccentricity=1.4] {angle = ab--bb--cb};
\draw[purple] pic["$\vartheta$", draw=black,purple, angle radius=0.85cm, angle eccentricity=1.15] {angle = ab--bb--mb};

\coordinate (A2) at (2.4,0.9);
\coordinate (B2) at (3.3,0.9);
\draw[->, thick, bend left] (A2) to node[midway, above] {increase $\varphi$} (B2);

\coordinate (abp) at (4, 0);
\coordinate (abp2) at (3.8, 0.1);
\coordinate (bbp) at (3.6, 0.8);
\coordinate (mbp) at (3.85, 1.2); 
\coordinate (mbp2) at (3.93, 1.22); 
\coordinate (cbp) at (4.3, 1.7);

\fill (abp) circle (0.5pt) node[right] {$\bar a$};
\fill (abp2) circle (0.5pt) node[left] {$\bar a'$};
\fill (bbp) circle (0.5pt) node[below left] {$\bar b$};
\fill (mbp) circle (0.5pt) node[above left] {$\bar m_{\epsilon\mu}$};
\fill (mbp2) circle (0.5pt) node[below right] {$\bar m_{\mu}$};
\fill (cbp) circle (0.5pt) node[above right] {$\bar c$};

\draw[gray] (abp) -- (bbp) -- (mbp) -- cycle; 
\draw[thick, green] (bbp) -- (mbp) -- (cbp) -- cycle;          
\draw [dashed,gray] (abp) -- (cbp);
\draw[thick, red] (abp2) -- (bbp) -- (mbp) -- cycle; 
\draw [dashed] (abp2) -- (cbp);
\begin{axis}[axis lines=none,width=80pt, at={(600,153)}]
\addplot[name path=f1,red,domain={-1:1}] {cosh(arcsinh(x+0.65))+0.1};
\addplot[name path=f2,red,domain={-1:1}] {cosh(arcsinh(x+0.65))-0.1};
\addplot[name path=f3,green,domain={-1:1}] {-cosh(arcsinh(x-0.7))+2.51};
\addplot[name path=f4,green,domain={-1:1}] {-cosh(arcsinh(x-0.7))+2.31};

\addplot[pattern=north west lines, pattern color=gray!50]fill between[of=f3 and f2, soft clip={domain=-0.55:0}];
\addplot[pattern=north west lines, pattern color=gray!50]fill between[of=f3 and f2, soft clip={domain=0:0.6}];
\end{axis}
\draw (3.57,1.1) node {$M_\epsilon$};

\end{tikzpicture}
\end{center}

The four-point condition states that, in this scenario $\tau(a,c) \leq \tau(\bar a,\bar c)$.
In a second step, we move the point $\bar a$ to $\bar a'$ in $\mathbb{L}^2_K$ while fixing its distance to $\bar b$ such that $\tau(a,c) = \tau(\bar a',\bar c) \leq \tau(\bar a,\bar c)$.
To see that $\tau(\bar a',\bar m_{\epsilon\mu}) \geq \tau(\bar a,\bar m_{\epsilon\mu})$, we use twice the monotonicity of a hinge:\\
Consider the hinge $\bar a \bar b \bar c$ with the angle $\varphi$ at $\bar b$. The monotonicity of the Lorentzian law of cosines tells us that the side length $\tau(\bar a, \bar c)$ is monotonically decreasing in $\varphi$. 
Since we want to decrease the distance $\tau(\bar a, \bar c)$, $\varphi$ must increase to obtain $\bar a'$.
Since $\varphi$ and $\vartheta$ lie in the same plane and the triangle $\triangle \bar b \bar m_{\epsilon\mu} \bar c$ remains fixed, the difference $\vartheta - \varphi$ stays constant. Hence, $\vartheta$ decreases, which, by the Lorentzian law of cosines, indicates that $\tau(\bar a, \bar m_{\epsilon\mu})$ decreases. \\
In conclusion, we have found four comparison points $\bar a', \bar b, \bar c, \bar m_{\epsilon\mu}$ for $a, b, c, m_{\epsilon\mu}$ where all side lengths agree, except for $\tau(a, m_{\epsilon\mu}) \geq \tau(\bar a', \bar m_{\epsilon\mu})$. Thus, $\triangle \bar a' \bar b \bar c$ is a comparison triangle in $\mathbb{L}^2_K$ and $\bar m_{\epsilon\mu}$ is an $\epsilon$-$\mu$ midpoint of $\bar b \bar c$. \\
Let $\bar m_\mu$ be the actual $\mu$-midpoint of $\bar b \bar c$. For a fixed triangle $\triangle \bar a' \bar b \bar c \in \mathbb{L}^2_K$ and given $\mu \in (0,1)$, the positions of all possible $\epsilon$-$\mu$ midpoints of $\bar b \bar c$ form an $\epsilon$-dependent neighbourhood $M_\epsilon$ (bounded by four hyperboloids) around $\bar m_\mu$ which converges to $\{\bar m_\mu\}$ as $\epsilon \to 0$.\\
Hence, every point $m_{\epsilon\mu} \in M_\epsilon$ satisfies
\[
|\tau(\bar a', m_{\epsilon\mu}) - \tau(\bar a', \bar m_{\mu})| \leq f(\epsilon)
\]
for some $f : \R_{\geq 0} \to \R_{\geq 0}$ with $\lim_{\epsilon \to 0} f(\epsilon) = 0$. This implies
\[
\tau(a, m_{\epsilon\mu}) \geq \tau(\bar a, \bar m_{\mu}) - f(\epsilon).
\]

For the second case, consider the same timelike triangle $\triangle a b c \in U$ and an $\epsilon$-$\mu$ midpoint $m_{\epsilon\mu}$ between $a$ and $c$. Consider these four points as two triangles $\triangle a b m_{\epsilon\mu}$ and $\triangle b m_{\epsilon\mu} c$, construct two comparison triangles in $\mathbb{L}^2_K$, and glue them along their common edge $\gamma_{\bar b \bar m_{\epsilon\mu}}$ in opposite directions. The four-point condition states that $\tau(b, m_{\epsilon\mu}) \geq \tau(\bar b, \bar m_{\epsilon\mu})$. 
Let $\bar m_\mu$ be the actual $\mu$-midpoint of $\bar a \bar c$. Then, by the same argument as above, we can conclude that there exists $f(\epsilon)$ with $\lim_{\epsilon \to 0} f(\epsilon) = 0$ such that
\[
\tau(b, m_{\epsilon\mu}) \geq \tau(\bar b, \bar m_{\mu}) - f(\epsilon).
\]

\textbf{\ref{enum:curvature3} $\Rightarrow$ \ref{enum:curvature1}:}
For a given timelike triangle $\triangle a b c \in U$ and a $\mu$-midpoint $m_\mu$ on one of the distance realizers (assume for this argument that $m_\mu$ is a $\mu$-midpoint for $b$ and $c$), we can take a sequence of $\frac{1}{n}$-$\mu$ midpoints which converges to $m_\mu$ as $n \to \infty$ (such a sequence always exists, for example by taking the sequence along the distance realizer $\gamma_{bc}$ itself). The inequality
\[
\tau(a, m_{\frac{1}{n}\mu}) \geq \tau(\bar a, \bar m_{\mu}) - f\left(\tfrac{1}{n}\right) \xrightarrow[n\to\infty]{} \tau(\bar a, \bar m_{\mu}),
\]
together with the continuity of $\tau$, this implies
\[
\tau(a, m_{\mu}) \geq \tau(\bar a, \bar m_{\mu}).
\]

\end{proof}

\begin{remark}
    Note that for \ref{enum:curvature2} $\Rightarrow$ \ref{enum:curvature3}, the local existence of geodesics is not needed. This implication holds for any \LpLS.
\end{remark}

The same equivalence of curvature conditions can also be proven in the case of curvature bounded below.
\begin{lemma}
\label{lem:curvature_equivalence_lower}
The following conditions are equivalent for a strongly causal and regular \LpLS \, \Xll:
\begin{enumerate}
    \item \Xll has a lower curvature bound $K$ in the sense of the triangle condition (Definition~\ref{def:oneSidedLorTriangle}).\label{enum:curvature12}
    \item \Xll has a lower curvature bound $K$ in the sense of the four-point condition (Definition~\ref{def:LorfourPts}), and locally every pair of timelike-related points can be connected by a distance realizer.\label{enum:curvature22}
    \item \Xll has a lower curvature bound $K$ in the sense of the one-sided $\epsilon$-$\mu$ triangle condition (Definition~\ref{def:epsmuCBA}), and locally every pair of timelike-related points can be connected by a distance realizer.\label{enum:curvature32}
\end{enumerate}
\end{lemma}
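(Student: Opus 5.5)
We follow the same cycle of implications as in the proof of \Cref{lem:curvature_equivalence}, reversing every inequality and adapting the gluing constructions to the lower-bound four-point picture of \Cref{def:LorfourPts}. For \ref{enum:curvature12} $\Rightarrow$ \ref{enum:curvature22} we would invoke the known equivalence of triangle comparison and the four-point condition for lower bounds from \cite{secCurvBouns} (the lower-bound analogue of \cite[Prop.~4.4]{beran2025reshetnyakmajorisationdiscreteupper}). The local existence of distance realizers is already part of \Cref{def:oneSidedLorTriangle}.

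For \ref{enum:curvature22} $\Rightarrow$ \ref{enum:curvature32} we fix $a\ll b\ll c$ in a four-point neighbourhood $U$, a parameter $\mu\in(0,1)$, and an $\epsilon$-$\mu$ midpoint $m_{\epsilon\mu}$ of two of the vertices. There are three cases.
\begin{enumerate}
\item If $m_{\epsilon\mu}$ sits between $a$ and $c$ and we compare $\tau(b,m_{\epsilon\mu})$ or $\tau(m_{\epsilon\mu},b)$, suppose first $b\ll m_{\epsilon\mu}$. The chain $a\ll b\ll m_{\epsilon\mu}\le c$ is exactly the configuration of the lower four-point condition, with $x_1=a$, $x_2=b$, $x_3=m_{\epsilon\mu}$, $x_4=c$. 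It directly gives $\tau(b,m_{\epsilon\mu})\le\tau(\tilde b,\tilde m_{\epsilon\mu})$ in the picture where $\triangle\tilde a\tilde b\tilde c$ is the comparison triangle and $\tilde m_{\epsilon\mu}$ lies off the side $\tilde a\tilde c$. The case $m_{\epsilon\mu}\ll b$ follows by time reversal.
\item If $m_{\epsilon\mu}$ sits between $b$ and $c$ (or, by time reversal, between $a$ and $b$), we apply the four-point condition to $a\ll b\ll m_{\epsilon\mu}\le c$ to get an estimate on $\tau(b,m_{\epsilon\mu})$. We then reposition the comparison points using hinge monotonicity of the Lorentzian law of cosines, twice, exactly as in \Cref{lem:curvature_equivalence}. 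The aim is a planar configuration $\bar a,\bar b,\bar c,\bar m_{\epsilon\mu}$ in which $\triangle\bar a\bar b\bar c$ is a genuine comparison triangle, all side lengths agree with those in $X$ except one, and $\tau(a,m_{\epsilon\mu})\le\tau(\bar a,\bar m_{\epsilon\mu})$.
\item In both of the previous cases, $\bar m_{\epsilon\mu}$ lies in the region $M_\epsilon$ of $\epsilon$-$\mu$ midpoints of the relevant side. This region shrinks to $\{\bar m_\mu\}$ as $\epsilon\to0$, and $\tau$ is continuous on $\mathbb{L}^2_K$. Hence there is $f_\mu$ with $f_\mu(\epsilon)\to0$ and $\tau(c,m_{\epsilon\mu})\le\tau(\bar c,\bar m_\mu)+f_\mu(\epsilon)$.
\end{enumerate}

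For \ref{enum:curvature32} $\Rightarrow$ \ref{enum:curvature12} we argue verbatim as before. Take a $\mu$-midpoint $m_\mu$ on a realizer, approximate it by the $\frac1n$-$\mu$ midpoints lying on that realizer, and pass to the limit in $\tau(c,m_{\frac1n\mu})\le\tau(\bar c,\bar m_\mu)+f_\mu(\frac1n)$ using the continuity of $\tau$ on $U$.

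The main obstacle is the second case. Its hinge-monotonicity step must now move the comparison point in the opposite direction from the upper-bound proof. We must check that the lower four-point picture (two triangles glued along $\tilde x_1\tilde x_2$, with the comparison taken between $\tilde x_3$ and $\tilde x_4$) really produces the reversed inequality on the correct side. We would first try the four points $a\ll b\ll m_{\epsilon\mu}\le c$ directly: glue $\triangle\bar a\bar b\bar m_{\epsilon\mu}$ and $\triangle\bar a\bar b\bar c$ along $\bar a\bar b$ and read off the bound on $\tau(m_{\epsilon\mu},c)$. Then we would straighten the configuration by opening the angle at $\bar b$, tracking the signs of the angles via the law of cosines. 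We would also check that the timelike-relation hypotheses on $m_{\epsilon\mu}$ are preserved, possibly after shrinking $U$.
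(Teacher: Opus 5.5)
Your steps (1)$\Rightarrow$(2) and (3)$\Rightarrow$(1) match the paper. The step (2)$\Rightarrow$(3), however, has a genuine gap: in every case you put the approximate midpoint in the slot $x_3$ of the lower four-point configuration, and in that slot the condition carries no curvature information.

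As you note at the end, the lower condition of \Cref{def:LorfourPts} glues $\triangle\tilde x_1\tilde x_2\tilde x_3$ and $\triangle\tilde x_1\tilde x_2\tilde x_4$ along $\tilde x_1\tilde x_2$ and asserts $\tau(x_3,x_4)\ge\tau(\tilde x_3,\tilde x_4)$. Take $x_1=a$, $x_2=b$, $x_3=m_{\epsilon\mu}$, $x_4=c$. Then $\tau(b,m_{\epsilon\mu})$ is a side of $\triangle\tilde a\tilde b\tilde m_{\epsilon\mu}$ and is preserved by construction, so in case 1 there is nothing to ``read off'' about it. The inequality you write would come from gluing along $\tilde a\tilde c$ and bounding $\tau(x_2,x_3)$ from above, which is not the lower condition. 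What you actually obtain is $\tau(m_{\epsilon\mu},c)\ge\tau(\tilde m_{\epsilon\mu},\tilde c)$. Hence $\tilde m_{\epsilon\mu}$ need not lie in $M_\epsilon$, and your step 3 does not apply.

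More seriously, this inequality is essentially automatic. The reverse triangle inequality in $\mathbb{L}^2_K$ gives $\tau(\tilde m_{\epsilon\mu},\tilde c)\le\tau(a,c)-\tau(a,m_{\epsilon\mu})<\tau(m_{\epsilon\mu},c)+2\epsilon$. It even gives $\tau(\tilde m_{\epsilon\mu},\tilde c)\le\tau(m_{\epsilon\mu},c)$ outright whenever $m_{\epsilon\mu}$ lies on a realizer from $a$ to $c$. The same computation with $b$ in place of $a$ settles your case 2. So the hypothesis you invoke holds (for exact midpoints) in every Lorentzian pre-length space. No repositioning of $\bar m_{\epsilon\mu}$ by hinge monotonicity, such as ``opening the angle at $\bar b$'', can then produce the needed upper bound on $\tau(b,m_{\epsilon\mu})$ or $\tau(a,m_{\epsilon\mu})$.

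The missing idea, which is what the paper does, is to put $m_{\epsilon\mu}$ in the slot $x_2$, using the chain $a\ll m_{\epsilon\mu}\ll b\le c$. For $m_{\epsilon\mu}$ between $a$ and $b$:
\begin{enumerate}
\item Glue $\triangle\bar a\bar m_{\epsilon\mu}\bar b$ and $\triangle\bar a\bar m_{\epsilon\mu}\bar c$ along $\bar a\bar m_{\epsilon\mu}$.
\item Both distances defining the midpoint are now preserved sides, so $\bar m_{\epsilon\mu}$ is automatically an $\epsilon$-$\mu$ midpoint of $\bar a\bar b$.
\item The four-point inequality $\tau(b,c)\ge\tau(\bar b,\bar c)$ is now non-trivial; for an exact midpoint on a realizer it is equivalent to triangle comparison.
\item Move $\bar c$ about $\bar a$ to $\bar c'$ with $\tau(\bar b,\bar c')=\tau(b,c)$. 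Using hinge monotonicity twice gives $\tau(m_{\epsilon\mu},c)\le\tau(\bar m_{\epsilon\mu},\bar c')$, with $\triangle\bar a\bar b\bar c'$ a genuine comparison triangle.
\end{enumerate}
For $m_{\epsilon\mu}$ on $ac$ with $m_{\epsilon\mu}\ll b$, the same chain works, now moving $\bar b$ to $\bar b'$.

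The configurations you treat directly ($m_{\epsilon\mu}$ on $bc$, or on $ac$ with $b\ll m_{\epsilon\mu}$) are exactly those where the causal order forces $m_{\epsilon\mu}$ into slot $x_3$. The paper reaches them by time reversal instead. In short, you have the direct cases and the symmetry-reduced cases the wrong way round.
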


\begin{proof}
\textbf{\ref{enum:curvature12} $\Rightarrow$ \ref{enum:curvature22}:} See Theorem 5.1 of \cite{secCurvBouns}.

\textbf{\ref{enum:curvature22} $\Rightarrow$ \ref{enum:curvature32}:} The idea for the proof is the same as in \textbf{\ref{enum:curvature2} $\Rightarrow$ \ref{enum:curvature3}} of \Cref{lem:curvature_equivalence}. 
We distinguish two cases. In the first, we show the $\epsilon$-$\mu$ triangle condition for an $\epsilon$-$\mu$ midpoint on one of the shorter edges. Moreover, due to symmetry, we can restrict the discussion to the past edge (see picture on the left), and in the second case, the $\epsilon$-$\mu$ midpoint is on the longest edge (picture on the right).\\
Given $a\ll b \ll c\in U$ in a four point neighbourhood $U\subset X$ and $\mu\in(0,1)$, and let $m_{\epsilon\mu}$ be an $\epsilon$-$\mu$ midpoint  between $a$ and $b$. Now consider the two triangles $\triangle a m_{\epsilon \mu}b$ and $\triangle a m_{\epsilon \mu}c$ and construct their comparison triangles in $\mathbb{L}^2_K$, glued together along the edge $\bar a \bar m_{\epsilon \mu}$ in opposite directions. 
Since this is a valid four point comparison configuration for lower curvature bounds, we know $\tau(b,c)\geq \tau (\bar b, \bar c)$.
As in the proof of \Cref{lem:curvature_equivalence}, we can also see these four points as two hinges with the angles $\varphi = \tilde\measuredangle_{\bar a }^K \bar m_{\epsilon \mu} \bar c$ and $\vartheta = \tilde\measuredangle_{\bar a}^K \bar v \bar c$. Closing these angles and using the law of cosine twice, we get a new point $\bar c'$ such that $\tau(b, c) = \tau (\bar b,\bar c')$ and $\tau(m_{\epsilon\mu}, c) \leq \tau (\bar m_{\epsilon\mu},\bar c')$. Since $m_{\epsilon \mu }$ is an $\epsilon$-$\mu$ midpoint of $a$ and $b$, $\bar m_{\epsilon \mu }$ will be in an $\epsilon$-dependent neighbourhood of $\bar m_{\mu}$ which converges to a point for $\epsilon\to0$. Therefore, $|\tau(\bar m_{\epsilon \mu }, \bar c') -\tau(\bar m_{\mu }, \bar c') |< f(\epsilon)$ with $\lim_{\epsilon\to 0}f(\epsilon) =0$. Altogether, we get that 
$$\tau(m_{ \epsilon \mu }, c) \leq \tau(\bar m_{ \mu }, \bar c')+ f(\epsilon).$$
\begin{center}
\fbox{\begin{tikzpicture}[scale=2.8, >=Stealth]
\coordinate (a) at (0, 0);
\coordinate (b) at (-0.4, 0.8);
\coordinate (m) at (-0.23, 0.4); 
\coordinate (c) at (0.3, 1.7);

\fill (a) circle (0.5pt) node[right] {$a$};
\fill (b) circle (0.5pt) node[below left] {$b$};
\fill (m) circle (0.5pt) node[left] {$m_{\epsilon\mu}$};
\fill (c) circle (0.5pt) node[above right] {$c$};

\draw[thick,red] (a) -- (b) -- (m) -- cycle; 
\draw[thick, green] (a) -- (m) -- (c) -- cycle;          
\draw [dashed] (b) -- (c);

\coordinate (A) at (0.3,0.9);
\coordinate (B) at (1,0.9);
\draw[->, thick, bend left] (A) to node[midway, above] {$\triangle a m_{\epsilon\mu} b$ } node[midway, below] {$\triangle a m_{\epsilon\mu} c$} (B);

\coordinate (ab) at (1.5, 0);
\coordinate (bb) at (1.1, 0.8);
\coordinate (mb) at (1.32, 0.43); 
\coordinate (mbp) at (1.27, 0.45); 
\coordinate (cb) at (1.8, 1.7);
\coordinate (cbp) at (1.6, 1.6);

\fill (ab) circle (0.5pt) node[right] {$\bar a$};
\fill (bb) circle (0.5pt) node[below left] {$\bar b$};
\fill (mb) circle (0.5pt) node[right] {$\bar m_{\epsilon\mu}$};
\fill (mbp) circle (0.5pt) node[left] {$\bar m_{\mu}$};
\fill (cb) circle (0.5pt) node[above left] {$\bar c$};
\fill (cbp) circle (0.5pt) node[above left] {$\bar c'$};

\draw[thick, red] (ab) -- (bb) -- (mb) -- cycle; 
\draw[thick, green] (ab) -- (mb) -- (cb) -- cycle;          
\draw[gray] (ab) -- (bb) -- (cbp) -- cycle;          
\draw [dashed] (bb) -- (cb);
\draw [dashed, gray] (mb) -- (cbp);

\draw[blue] pic["$\varphi$", draw=black,blue, angle radius=0.4cm, angle eccentricity=1.4] {angle = cb--ab--mb};
\draw[purple] pic["$\vartheta$", draw=black,purple, angle radius=0.85cm, angle eccentricity=1.15] {angle = cb--ab--bb};
\end{tikzpicture}} 
\fbox{\begin{tikzpicture}[scale=2.8, >=Stealth]
\coordinate (a) at (0, 0);
\coordinate (b) at (-0.3, 1.1);
\coordinate (m) at (0.1, 0.3); 
\coordinate (c) at (0.3, 1.7);

\fill (a) circle (0.5pt) node[right] {$a$};
\fill (b) circle (0.5pt) node[right] {$b$};
\fill (m) circle (0.5pt) node[right] {$m_{\epsilon\mu}$};
\fill (c) circle (0.5pt) node[above right] {$c$};

\draw[thick,red] (a) -- (m) -- (b) -- cycle; 
\draw[thick, green] (a) -- (m) -- (c) -- cycle;          
\draw [dashed] (b) -- (c);

\coordinate (A) at (0.3,0.9);
\coordinate (B) at (1,0.9);
\draw[->, thick, bend left] (A) to node[midway, above] {$\triangle a m_{\epsilon\mu} b$ } node[midway, below] {$\triangle a m_{\epsilon\mu} c$} (B);

\coordinate (ab) at (1.4, 0);
\coordinate (bb) at (1.1, 1.1);
\coordinate (bbp) at (1.2, 1.05);
\coordinate (mb) at (1.45, 0.5); 
\coordinate (cb) at (1.7, 1.7);

\fill (ab) circle (0.5pt) node[right] {$\bar a$};
\fill (bb) circle (0.5pt) node[above] {$\bar b$};
\fill (bbp) circle (0.5pt) node[right] {$\bar b'$};
\fill (mb) circle (0.5pt) node[right] {$\bar m_{\epsilon\mu}$};
\fill (cb) circle (0.5pt) node[above left] {$\bar c$};

\draw[thick, red] (ab) -- (bb) -- (mb) -- cycle; 
\draw[thick, green] (ab) -- (mb) -- (cb) -- cycle;          
\draw [dashed] (bb) -- (cb);
\draw[gray] (ab) -- (bbp) -- (cb) -- cycle;          
\draw [dashed,gray] (bbp) -- (mb);

\draw[blue] pic["$\varphi$", draw=black,blue, angle radius=0.4cm, angle eccentricity=1.4] {angle = mb--ab--bb};
\draw[purple] pic["$\vartheta$", draw=black,purple, angle radius=0.85cm, angle eccentricity=1.15] {angle = cb--ab--bb};
\end{tikzpicture}}
\end{center}
In the second case, we assume the $\epsilon$-$\mu$ midpoint to be on the edge $ac$, in the past of $b$. We build comparison triangles for $\triangle a m_{\epsilon\mu} b$ and $\triangle a m_{\epsilon\mu} c$ and glue them together in opposite directions. The four point condition states that $\tau(b,c)\geq \tau( \bar b, \bar c)$. Keeping all the side lengths except $\tau(\bar m_{\epsilon\mu},\bar b)$ and $\tau(\bar b,\bar c)$ fixed, we can move $\bar b$ to $\bar b'$ such that
$$\tau( b, c) = \tau(\bar b',\bar c)\quad \text{and}\quad\tau(m_{\epsilon\mu},b) \leq \tau(\bar m_{\epsilon\mu},\bar b') \leq \tau(\bar m_{\mu},\bar b') + f(\epsilon).$$ 

\textbf{\ref{enum:curvature32} $\Rightarrow$ \ref{enum:curvature12}:}  
Analogous to \textbf{\ref{enum:curvature3} $\Rightarrow$ \ref{enum:curvature1}:} of \Cref{lem:curvature_equivalence}.
\end{proof}

\section{The Space of Directions}
\subsection{Definitions}
In the following, we restrict our attention to future-directed, timelike curves starting from a common point $p\in X$ and use the unsigned version of the angles (that is, all angles are understood to be nonnegative). A more detailed presentation of the following definitions and lemmas can be found in Sections 3.2 and subsequent sections of \cite{HyperbolicAnglesBeran_2023}, as well as in \cite{alexander2021generalizedconeslorentzianlength}.\\
We start by defining the space of directions. 
\begin{definition}
    Let \Xll be a \LpLS \, with $\tau$ locally finite, and let $p \in X$. We define the set
    \[
        D_p^+ \mDef \Bigl\{\gamma : [0,\epsilon) \to X \,\Big|\,
        \gamma \text{ future-directed, timelike, geodesic, and } \gamma(0) = p \Bigr\}
    \]
    to be the set of directions of future-directed timelike geodesics starting at $p$. We equip this set with an equivalence relation $\sim$ by declaring $\gamma_1 \sim \gamma_2$ if $\measuredangle_p(\gamma_1, \gamma_2) = 0$. \\
    We denote the quotient set by
    \[
        \mathcal{D}_p^+ \mDef D_p^+ / \sim.
    \]
    In fact, this space is a metric space with metric $\dd_D(\gamma_1, \gamma_2) = \measuredangle_p(\gamma_1, \gamma_2)$.
    Furthermore, we call the metric completion of $\bigl(\mathcal{D}_p^+,\measuredangle_p\bigr)$, denoted by
    \[
        \bigl(\Sigma_p^+,\dd_\Sigma = \measuredangle_p \bigr),
    \]
    the \textbf{space of future directions} at $p$.
\end{definition}
\begin{example}
If we have a smooth $n$-dimensional Riemannian manifold, the space of directions is isometric to the sphere $S^{n-1}$, which has constant sectional curvature of $1$.\\
For Minkowski space (or any smooth Lorentzian manifold), the space of timelike, future pointing directions is the hyperbolic space $\mathbb{H}^{n-1}\subset \R^{1,n-1}$, which has a sectional curvature of $-1$ \cite[Prop. 29]{ONeill1983}. Therefore, it is natural to expect sectional curvature bounds $-1$ for the space of directions in Lorentzian length spaces.
\end{example}
To take into account the speed of geodesics, we construct a cone over the space of directions to obtain an analogous notion of the tangent space. In the Lorentzian setting, we need the definition of the Minkowski cone for this:
\begin{definition} \label{def:cone}
     Let $(Y,\dd_Y)$ be a metric space. We define the \textbf{Minkowski cone} \((X,\dd_X,\ll,\leq,\tau)\) over $Y$ to be the set
     \[
      X \coloneq Con(Y) = ([0,\infty)\times Y)/(\{0\} \times Y) \text{ with } x_i=(r_i,y_i)\in X \, ,
     \]
     equipped with the standard cone metric, 
     \[
     \dd_X(x_1,x_2) \coloneq \sqrt{r_1^2 + r_2^2 - 2r_1 r_2 \cos(\min\{\pi,d_Y(y_1,y_2)\})} \, .
     \]
     Moreover, the causal relations are defined to be
     \[
     x_1 \leq x_2 \iff r_1^2 + r_2^2 - 2r_1 r_2 \cosh(d_Y(y_1,y_2)) \geq 0 \, ,
     \]
     with time separation function,
     \[
     \tau(x_1,x_2) \coloneq \begin{cases} \sqrt{r_1^2 + r_2^2 - 2r_1 r_2 \cosh(d_Y(y_1,y_2))} &\text{if} \quad x_1 \leq x_2, \\ 0 & \text{otherwise,}\end{cases}\\
     \]
     inducing the timelike relation
     \[
     x_1 \ll x_2 \iff \tau(x_1,x_2)>0 \, .
     \]
\end{definition}
\begin{lemma}
Let \((Y, \dd_Y)\) be a metric space and let \(X = \mathrm{Con}(Y)\) be equipped with the cone metric \(d_c\). Then \((X, d_c, \ll, \leq, \tau)\) is a Lorentzian pre-length space with time separation function \(\tau\) defined as above.
\end{lemma}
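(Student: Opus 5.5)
The proof rests on one observation: the causal structure and $\tau$ of $Con(Y)$ only involve the radii and the pairwise distances $\dd_Y(y_i,y_j)$. Any three points $y_1,y_2,y_3\in Y$ have distances satisfying the triangle inequality, so they can be realized by three points $\bar y_1,\bar y_2,\bar y_3$ in the hyperbolic plane $\mathbb{H}^2\subset\R^{1,2}$ (possibly degenerate, i.e.\ collinear). The map $(r_i,y_i)\mapsto r_i\bar y_i\in\R^{1,2}$ then preserves the quantity $r_i^2+r_j^2-2r_ir_j\cosh(\dd_Y(y_i,y_j))$. This is the Minkowski squared length of $r_j\bar y_j-r_i\bar y_i$, because $\langle \bar y_i,\bar y_j\rangle=-\cosh \dd_{\mathbb{H}}(\bar y_i,\bar y_j)$. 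Every axiom involving at most three points will therefore be reduced to the corresponding fact in Minkowski space $\R^{1,2}$.

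\emph{Time orientation.} As written, the condition defining $\leq$ is symmetric in $x_1,x_2$, so I will first fix the intended orientation: $x_1\leq x_2$ additionally requires $r_1\leq r_2$. This is forced by the model. The vector $r_2\bar y_2-r_1\bar y_1$ is causal precisely when $Q:=r_1^2+r_2^2-2r_1r_2\cosh d\ge 0$, i.e.\ $r_2\ge r_1e^{d}$ or $r_2\le r_1e^{-d}$. It is future directed exactly in the first case, which is the case $r_1\le r_2$. The tip is $\leq$ everything.

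With this convention the axioms go as follows.
\begin{enumerate}
\item That $\dd_X$ is a metric is the classical Euclidean cone construction (Burago--Burago--Ivanov), so I will cite it.
\item Reflexivity of $\leq$ is immediate, since $Q=0$ and $r_1=r_2$.
\item Transitivity of $\leq$ and of $\ll$, together with the reverse triangle inequality $\tau(x_1,x_3)\ge\tau(x_1,x_2)+\tau(x_2,x_3)$, follow by embedding the triple $x_1,x_2,x_3$ into $\R^{1,2}$ as above. The future causal cone is convex and the reverse triangle inequality holds there.
\item $\ll\subset\leq$ holds because $\ll$ is defined by $\tau>0$, which already presupposes $\leq$.
\item $\tau(x_1,x_2)>0$ iff $x_1\ll x_2$, and $x_1\not\leq x_2\Rightarrow\tau=0$, both hold by definition.
\end{enumerate}

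\emph{Lower semicontinuity of $\tau$ (expected main obstacle).} Since $\tau\ge 0$, it suffices to check lower semicontinuity at pairs with $\tau(x_1,x_2)>0$. For such a pair $Q>0$ and $r_2>r_1e^{d}$, with $d=\dd_Y(y_1,y_2)$.

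If $r_1>0$, I will show that the radii and $\dd_Y$ depend continuously on $\dd_X$ near this pair, and that the strict inequalities persist. Two facts are used.
\begin{enumerate}
\item $r=\dd_X(x,o)$, where $o$ is the tip, is $1$-Lipschitz.
\item For $r$ bounded away from $0$, the cone metric controls $\min\{\pi,\dd_Y\}$ via the law of cosines.
\end{enumerate}
The delicate point is the truncation $\min\{\pi,\cdot\}$: $\dd_X$ does not see $\dd_Y$ beyond $\pi$. This does not matter here. Nearby points $x_i'$ with $\dd_X(x_i,x_i')$ small have $\dd_Y(y_i,y_i')$ small, because $r_i$ is bounded away from $0$. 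The triangle inequality in $Y$ then gives $|\dd_Y(y_1',y_2')-d|$ small, so $Q$ and $r_2-r_1e^{d}$ vary continuously and $\tau$ is even continuous there.

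If $r_1=0$, then $\tau(o,x_2)=r_2$. For nearby $x_1'$ with small radius $r_1'$, we have $\tau(x_1',x_2')\ge r_2'-r_1'e^{\dd_Y}$, roughly. This tends to $r_2$ as $r_1'\to0$, provided $\dd_Y(y_1',y_2')$ stays bounded. If it is unbounded I will instead use that $r_2'\to r_2$ and that $r_1'\cosh(\dd_Y)$ may be large. Here I expect to need only $\liminf\ge r_2$ along sequences that remain timelike. If the sequence leaves the timelike set, lower semicontinuity may genuinely fail at the tip; in that case I will restrict the claim to $X\setminus\{o\}$, or note that the tip needs separate treatment. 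This case analysis at the tip is the step I expect to require the most care.
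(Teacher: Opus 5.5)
\textbf{The tip case cannot be completed: the lemma is false at the tip whenever $Y$ is unbounded, so your proposal should conclude with a corrected statement rather than a hedge.}

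\textbf{Comparison with the paper.} Your route is genuinely different: the paper gives no argument and only cites Proposition 2.2 of \cite{alexander2021generalizedconeslorentzianlength}. Away from the tip your direct verification is sound.
\begin{itemize}
\item Realising three pairwise distances in $\mathbb{H}^2\subset\R^{1,2}$ correctly gives transitivity of $\leq$ and $\ll$ and the reverse triangle inequality.
\item On $\{r>0\}$ the identity $\dd_X^2=(r-r')^2+2rr'\bigl(1-\cos\min\{\pi,\dd_Y\}\bigr)$ shows that the cone metric induces the product topology of $(0,\infty)\times Y$. Hence $\tau$ is continuous at timelike pairs not involving $o$.
\item Your orientation fix is necessary, not cosmetic. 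With $\leq$ as literally written, both $\leq$ and $\tau$ are symmetric. So $x\ll y$ forces $y\ll x$, and the reverse triangle inequality would give $0=\tau(x,x)\ge 2\tau(x,y)>0$.
\end{itemize}
Your direct argument therefore exposes something the bare citation hides.

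\textbf{The tip.} This is where you hedge, and no argument can close the case, because lower semicontinuity genuinely fails there whenever $Y$ is unbounded.
\begin{itemize}
\item Concrete example: take $Y=\R$, $z=(1,0)$ and $x_n=(1/n,n)$. Then $\dd_X(x_n,o)=1/n\to 0$. However $n^{-2}+1-2n^{-1}\cosh n<0$ for every $n\ge 1$, so $x_n\not\leq z$ and $\tau(x_n,z)=0$. Meanwhile $\tau(o,z)=1$.
\item General unbounded $Y$: pick $y_n$ with $d_n:=\dd_Y(y_n,y_0)\to\infty$, and set $r_n=(\cosh d_n)^{-1/2}$ and $z=(1,y_0)$. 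Then $r_n^2+1-2r_n\cosh d_n\to-\infty$ while $(r_n,y_n)\to o$.
\item Bounded $Y$: if $\mathrm{diam}\,Y\le D<\infty$, then $r_1'\cosh\dd_Y\le r_1'\cosh D\to 0$. Your estimate then gives continuity of $\tau$ at every pair $(o,z)$.
\end{itemize}
So what your argument actually establishes, with your orientation convention, is the following. $(Con(Y),\dd_X,\ll,\leq,\tau)$ is a Lorentzian pre-length space if and only if $Y$ is bounded, and $Con(Y)\setminus\{o\}$ always is one. You should state this as the result. The counterexample uses exactly this paper's definitions, so the one-line citation cannot rescue the statement as written.

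\textbf{Why this matters for the paper.} The failure is not a harmless edge case. The spaces $\Sigma_p^+$ to which the lemma is applied are typically unbounded: for Minkowski space, $\Sigma_p^+\cong\mathbb{H}^{n-1}$. In fact $Con(\R)$ above is exactly $T_0^+\R^{1,1}$ equipped with the Euclidean cone metric. In this metric, sequences such as $\tfrac1n(\cosh n,\sinh n)$ converge to the tip, although they escape to infinity in the usual topology of $\R^{1,1}$. For the later uses of $T_p^+$, the tip must either be removed or the cone given a finer metric near $o$.
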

\begin{proof}
See Proposition 2.2 of \cite{alexander2021generalizedconeslorentzianlength}.
\end{proof}

\begin{definition}
Let \Xll be a \LpLS \, with \(\tau\) locally finite and let \(p \in X\). The \textbf{future tangent cone} at \(p\) is the Lorentzian cone over the space of future directions at \(p\), defined by
\[
T_p^+ \mDef \mathrm{Con}(\Sigma_p^+).
\]
Similarly, 
\[
\mathcal{T}_p \mDef \mathrm{Con}(\mathcal{D}_p^+)
\]
is the cone over the set of directions before completion, and is dense in \(T_p^+\).
\end{definition}
The definition of the tangent cone naturally gives rise to the definition of the logarithm and exponential map by identifying points in the \LpLS \, and the tangent cone.
\begin{definition}[Logarithmic Map]\label{def-log}
Let \Xll be a locally (in the neighbourhood $U\subset X$) uniquely timelike geodesic Lorentzian length space and $p\in U$. For any \(x \in U\) with \(p \ll x\) (respectively \(x \ll p\)), there exists a unique future-directed (respectively past-directed) timelike geodesic \(\gamma\) from \(p\) to \(x\). The \textbf{future} and \textbf{past logarithmic maps} are defined by
\[
\log_p^+(x) \mDef \bigl(\tau(p,x), [\gamma]\bigr) \in T_p^+\, .
\]
Thus, 
\[
\log_p^+ \colon U \cap I^+(p) \to T_p^+
\]
maps points to elements of the future or past tangent cone.
\end{definition}

\begin{definition}[Exponential Map]\label{def:exp}
Let \Xll be a locally uniquely timelike geodesic Lorentzian length space, let \(p \in X\). The \textbf{future exponential map}
\[
\exp_p^+ \colon \log_p^+\bigl(U \cap I^+(p)\bigr) \subset T_p^+ \to U
\]
is defined by
\[
\exp_p^+\bigl(r,[\gamma]\bigr) \mDef \tilde\gamma(r),
\]
where \(\tilde\gamma \in [\gamma]\) denotes the unique representative parametrized by \(\tau\)-arclength.
\end{definition}
\begin{remark}
    The exponential map is only well defined if geodesics do not branch. Otherwise, there could be multiple arclength parametrized curves with the same starting direction. Therefore, we consider the exponential function as a set valued map.
\end{remark}

\subsection{Under Upper Curvature Bounds the Space of Directions is a Length Space}
\begin{theorem} \label{them:SoD_lengthspace}
Let \Xll be a Lorentzian length space with timelike sectional curvature bounded above by \(K \in \mathbb{R}\).
Then the space of directions \(\Sigma_p^+\) at any point \(p \in X\) is a length space, and the tangent cone is a Lorentzian pre-length space.
\end{theorem}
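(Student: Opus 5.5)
To show that \(\Sigma_p^+\) is a length space, I would use the standard criterion that a complete metric space is a length space as soon as it admits approximate midpoints. \(\Sigma_p^+\) is complete by construction, being the metric completion of \(\mathcal{D}_p^+\), and \(\mathcal{D}_p^+\) is dense in it. It therefore suffices to prove the following: for any two directions \([\gamma_1],[\gamma_2]\in\mathcal{D}_p^+\) with angle \(\theta=\measuredangle_p(\gamma_1,\gamma_2)\) and any \(\epsilon>0\), there is a direction \([\sigma]\in\mathcal{D}_p^+\) with
\[
\measuredangle_p(\gamma_1,\sigma)\le \tfrac{\theta}{2}+\epsilon \quad\text{and}\quad \measuredangle_p(\sigma,\gamma_2)\le \tfrac{\theta}{2}+\epsilon .
\]
Approximate midpoints between points of the completion then follow by density and the triangle inequality, and completeness upgrades them to the length property, as in the metric case of \cite{burago2001course}.

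To build \(\sigma\), I would work inside a comparison neighbourhood \(U\) of \(p\) for the upper bound \(K\). Take small \(t,s\) with \(x=\gamma_1(t)\) and \(y=\gamma_2(s)\) timelike related, say \(x\ll y\); this is possible because \(\tau\) is continuous on \(U\) and the angle is finite. By Remark~\ref{rem:anglecomp} and the monotonicity condition (Definition~\ref{def:MonotonLorCurvature}), the comparison angle \(\tilde\measuredangle^K_p xy\) increases to \(\theta\) as \(t,s\to0\), so for small \(t,s\) it lies within \(\epsilon\) of \(\theta\). Now let \(m\) be the \(\tfrac12\)-midpoint of a distance realizer from \(x\) to \(y\), which exists in \(U\), and let \(\sigma\) be the geodesic from \(p\) to \(m\). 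By the triangle comparison (Definition~\ref{def:oneSidedLorTriangle}), \(\tau(p,m)\ge\tau(\bar p,\bar m)\). The central estimate is then to bound the angles \(\measuredangle_p(\gamma_1,\sigma)\) and \(\measuredangle_p(\sigma,\gamma_2)\) by the corresponding comparison angles in the triangle \(\triangle\bar p\bar x\bar y\), and these sum to \(\tilde\measuredangle^K_p xy\) there. For this I would use the hinge monotonicity from the law of cosines, exactly as in the proof of Lemma~\ref{lem:curvature_equivalence}: a larger \(\tau(p,m)\) gives smaller comparison angles in \(\triangle pxm\) and \(\triangle pmy\). Finally, I would pass from comparison angles to actual angles, where in the upper-bound case the comparison angles at \(p\) are monotone in the parameters.

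\textbf{Main obstacle.} The difficulty lies in the direction of the inequalities. Under upper bounds the actual angle dominates the comparison angles, not the reverse, so the bound \(\measuredangle_p(\gamma_1,\sigma)\le\dots\) is not immediate. My first attempt would be to choose \(m\) and the points \(x,y\) along rays shrinking toward \(p\) with fixed ratio, and to use the monotonicity of \(\theta(s,t)\) along \(\gamma_1\) and \(\sigma\) together with the fact that \(\sigma\) shrinks by the same rescaling. An alternative is to use the Lorentzian triangle inequality for angles from \cite{HyperbolicAnglesBeran_2023}, which gives \(\measuredangle(\gamma_1,\sigma)+\measuredangle(\sigma,\gamma_2)\) comparable to \(\theta\), so that only one of the two angles needs an upper bound by approximately \(\theta/2\). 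If it becomes necessary to handle the possible non-existence of exact midpoints, I would replace them by \(\epsilon\)-midpoints and the \(\epsilon\)-\(\mu\) condition of Lemma~\ref{lem:curvature_equivalence}.

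The statement about the tangent cone then follows directly. Since \(\Sigma_p^+\) is a metric space, \(T_p^+=\mathrm{Con}(\Sigma_p^+)\) is a Lorentzian pre-length space by the cone lemma (Proposition 2.2 of \cite{alexander2021generalizedconeslorentzianlength}).
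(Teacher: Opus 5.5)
The reduction to approximate midpoints of directions in $\mathcal{D}_p^+$ and the cone statement are fine and agree with the paper. The construction of $\sigma$, however, fails, already in Minkowski space. Since $p\ll x\ll y$ forces $\tau(p,x)<\tau(p,y)$, the $\tau$-midpoint $\bar m$ of $\bar x\bar y$ does not bisect the angle at $\bar p$. The two pieces sum to $\tilde\measuredangle^K_p xy$, but nothing makes each of them close to $\theta/2$.

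Concretely, in $\mathbb{R}^{1,1}$ let $\gamma_1,\gamma_2$ be straight rays of rapidities $\mp\theta/2$ with unit directions $u_1,u_2$, and set $x=tu_1$, $y=su_2$. Then $x\ll y$ requires $t/s<e^{-\theta}$. The point $m=\frac12(x+y)$ has rapidity $\psi$ with $\tanh\psi=\frac{s-t}{s+t}\tanh\frac{\theta}{2}>\tanh^2\frac{\theta}{2}$. Hence $\measuredangle_p(\gamma_1,\sigma)=\frac{\theta}{2}+\psi$ exceeds $\frac{\theta}{2}$ by a fixed amount, however small $t,s$ are.

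Your hinge step is also wrong for one of the two triangles. In $\triangle pxm$, where $pm$ is the longest side, the quantity $\cosh\tilde\measuredangle^0_p xm=\frac{\tau(p,x)^2+\tau(p,m)^2-\tau(x,m)^2}{2\tau(p,x)\tau(p,m)}$ is \emph{increasing} in $\tau(p,m)$: its derivative has the sign of $\tau(p,m)^2-\tau(p,x)^2+\tau(x,m)^2>0$. So $\tau(p,m)\ge\bar\tau(\bar p,\bar m)$ gives $\tilde\measuredangle_p xm\ge\measuredangle_{\bar p}\bar x\bar m$, which is the useless direction. Only $\triangle pmy$ behaves as you claim.

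The missing idea is to split by angle rather than by time separation. Using continuity of $\tau$ and the intermediate value theorem, choose $b$ on the realizer from $x$ to $y$ with $\tilde\measuredangle^K_p xb=\tilde\measuredangle^K_p by$. No triangle comparison is then needed for the sum. Glue the comparison triangles of $\triangle pxb$ and $\triangle pby$ along $\bar p\bar b$ on opposite sides. The reverse triangle inequality gives $\bar\tau(\bar x,\bar y)\ge\tau(x,b)+\tau(b,y)=\tau(x,y)$. By monotonicity of the comparison angle in the opposite side, $2\tilde\measuredangle^K_p xb=\tilde\measuredangle^K_p xb+\tilde\measuredangle^K_p by\le\tilde\measuredangle^K_p xy\le\theta+\epsilon$.

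Your ``main obstacle'' comes from a sign confusion. For two future-directed curves at $p$ the signed angle is minus the unsigned one. So Remark~\ref{rem:anglecomp} under an upper bound says $\measuredangle_p\le\tilde\measuredangle^K_p$ for unsigned angles, and the unsigned comparison angle decreases, not increases, to $\theta$. That is exactly the favourable direction. It gives $\measuredangle_p(\gamma_1,\beta)\le\tilde\measuredangle^K_p xb\le\frac12(\theta+\epsilon)$ for the realizer $\beta$ from $p$ to $b$, and likewise for $\gamma_2$. The lower bounds then follow from the triangle inequality for $\measuredangle_p$.

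Your fallback via that triangle inequality cannot replace this step. It only bounds $\measuredangle_p(\gamma_1,\sigma)+\measuredangle_p(\sigma,\gamma_2)$ from below, whereas you need an upper bound on each term.
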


\begin{proof}
By \cite{burago2001course} (Theorem 2.4.16) it is enough to show that every two points in $\Sigma^+_p$ have an $\epsilon$-midpoint for all $\epsilon >0$. \\
Since $\Sigma^+_p$ is the completion of the space of all future-directed timelike distance realizers (quotiented by $\measuredangle_p=0$), it is sufficient to check the condition for two points in $\Sigma^+_p$, whose distance is achieved by distance realizers $\alpha,\gamma :[0,1]\to M$ with $\measuredangle_p(\alpha ,\gamma)<\infty$.\\
Since $\lim_{t,s\to 0} \Tilde{\measuredangle}^K_p\alpha(t)\gamma(s) = \measuredangle_p(\alpha,\gamma)$, we can choose $t,s>0$ such that \\
$p\ll a = \alpha(t) \ll c = \gamma(s)$ and
\begin{equation} \label{eq:epsilonalphagamma}
    |\Tilde{\measuredangle}^K_p\alpha(t)\gamma(s) -\measuredangle_p(\alpha,\gamma)| <\epsilon
    \end{equation}
for any given $\epsilon>0$. Without loss of generality, we can also assume that $a$ and $c$ are in a curvature comparison neighbourhood of $p$.\\
The comparison angle $\Tilde{\measuredangle}^K_p$ is a continuous function, depending on the three time separations of a given triangle. 
Since we are in a curvature comparison neighbourhood, $\tau$ is continuous. By the intermediate value theorem, we can find a point on the distance realizer between $a$ and $c$, $b\in \gamma_{ac}$ such that \begin{equation} \label{angleequal}
    \Tilde{\measuredangle}^K_p a b = \Tilde{\measuredangle}^K_p b c\, .
\end{equation}
We now construct two comparison triangles $\triangle \bar{p}\bar{a}\bar{b}$ and $\triangle \bar{p}\bar{b}\bar{c}$ in $\mathbb{L}^2(K)$ with common edge $\bar{p}\bar{b}$ (with the two triangles being on opposite sides of this edge).
\begin{center}
\begin{tikzpicture}
    \filldraw (0,0) circle (2pt) node[right] {$p$};
    \filldraw (-0.5,2) circle (2pt) node[left] {$a$};
    \filldraw (0.2,2.95) circle (2pt) node[above] {$b$};
    \filldraw (1,4) circle (2pt) node[right] {$c$};
    \draw (0,0) to[out=100, in=290] (-0.5,2);
    \draw (-0.4,1) node {$\alpha$};
    \draw (-0.5,2) -- (1,4);
    \draw (0,0) to[out=85,in=250] (1,4);
    \draw (0.5,1.8) node {$\gamma$};
    \draw[dotted] (0,0) to[out=95,in=270] (0.2,3);
\end{tikzpicture}
\hspace{2 cm}
\begin{tikzpicture}
    \filldraw (0,0) circle (2pt) node[right] {$\bar{p}$};
    \filldraw (-0.5,2) circle (2pt) node[left] {$\bar{a}$};
    \filldraw (0,3) circle (2pt) node[above] {$\bar{b}$};
    \filldraw (1,4) circle (2pt) node[right] {$\bar{c}$};
    \draw (0,0) --(-0.5,2);
    \draw (-0.5,2) -- (0,3);
    \draw (0,3) -- (1,4);
    \draw (0,0) -- (1,4);
    \draw[dotted] (0,0) -- (0,3);
    \draw[dotted] (-0.5,2) -- (1,4);
\end{tikzpicture}
\end{center}
Due to the inverse triangle inequality, we have
\begin{equation*}
    \tau(a,c) = \tau(a,b) + \tau(b,c) = \bar{\tau}(\bar{a},\bar{b}) + \bar{\tau}(\bar{b},\bar{c}) \leq \bar{\tau}(\bar{a},\bar{c}).
\end{equation*}
Using that the comparison angle (the non signed version) is monotonically decreasing in dependence on the time separation of the side length opposite to the angle, it follows that

\begin{equation} \label{eq:angleaccomp}
    \tilde\measuredangle_{\bar{p}}^K \bar{a} \bar{c}
    \leq \tilde\measuredangle_{p}^K ac.
\end{equation}
From this construction, we obtain the following inequality: 
\begin{equation*}
      2 \measuredangle_p (\alpha, \beta) \leq 
      2\tilde\measuredangle_{\bar{p}}^K \bar{a} \bar{b} = \tilde\measuredangle_{\bar{p}}^K \bar{a} \bar{b} + \tilde\measuredangle_{\bar{p}}^K \bar{b} \bar{c} 
      = \tilde\measuredangle_{\bar{p}}^K \bar{a} \bar{c}
    \leq \tilde\measuredangle_{p}^K ac
    \leq \measuredangle_{p} (\alpha, \gamma) + \epsilon.
\end{equation*}
Here, we first applied \Cref{rem:anglecomp}, then equation (\ref{angleequal}), then used that angles sum in a comparison neighbourhood, then applied equation (\ref{eq:angleaccomp}) and finally equation (\ref{eq:epsilonalphagamma}). Similarly for $\measuredangle_p (\beta ,\gamma)$,  

\begin{equation}
    \measuredangle_p (\alpha, \beta) \leq \frac{1}{2} \measuredangle_p (\alpha, \gamma) + \frac{1}{2} \epsilon,  \qquad \qquad \measuredangle_p  (\beta, \gamma) \leq \frac{1}{2} \measuredangle_p (\alpha, \gamma) + \frac{1}{2} \epsilon.
\end{equation}
Combining this with $\measuredangle_p (\alpha, \gamma) \leq \measuredangle (\alpha, \beta) + \measuredangle_p (\beta, \gamma)$ obtains $\varepsilon-$midpoints,
\begin{equation*}
    |\measuredangle_p (\alpha,\beta) -\frac{\measuredangle_p (\alpha,\gamma) }{2}| \leq \epsilon, \qquad \qquad  |\measuredangle_p (\beta,\gamma) -\frac{\measuredangle_p (\alpha,\gamma) }{2}| \leq \epsilon.
\end{equation*}
To see that the tangent cone is a \LpLS, we apply Proposition 2.2 of \cite{alexander2021generalizedconeslorentzianlength}.
\end{proof}

\section{Upper Curvature Bounds for the Space of Directions}
\subsection{Curvature Bounds of the Tangent Cone}

\begin{lemma} \label{lem:a_0 ll a_1}
    Let \Xll be a \LpLS \, satisfying the conditions of \Cref{def-log} and having timelike sectional curvature bounded below by $K \in \mathbb{R}$. Let $T_p^+$ be the tangent cone at a point $p \in X$. For points $\bar{a}_0,\bar{a}_1\in \log^+_p(U\cap I^+(p)) \subset T_p^+$ that are sufficiently close to the origin $O$ of $T_p^+$ and satisfying $\bar{a}_0 \ll \bar{a}_1$, we have $\exp_{p}(\bar{a}_0) \ll \exp_{p}(\bar{a}_1)$.
\end{lemma}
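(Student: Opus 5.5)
Write $\bar a_i=(r_i,[\gamma_i])$ with $r_i=\tau(p,x_i)$, where $x_i \mDef \exp_p(\bar a_i)=\gamma_i(r_i)$ and $\gamma_i$ is the unique arclength-parametrized timelike geodesic from $p$ (unique by the standing assumptions of \Cref{def-log}). Set $\theta \mDef \measuredangle_p(\gamma_0,\gamma_1)=\dd_\Sigma([\gamma_0],[\gamma_1])$. Since $\bar a_0\ll\bar a_1$ in the Minkowski cone (\Cref{def:cone}), we have $r_1^2+r_0^2-2r_0r_1\cosh\theta>0$ with $r_1>r_0$; equivalently, in Minkowski space $\mathbb{L}^2_0$ the hinge at $\bar p$ with legs of lengths $r_0,r_1$ and hyperbolic angle $\theta$ has timelike third side. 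The goal is to show $\tau(x_0,x_1)>0$. The plan is to compare the actual configuration $p, x_0, x_1$ with this hinge using the lower curvature bound: a lower bound should give a hinge comparison saying that the true separation between points on the two geodesics is at least the one predicted by the model hinge of the same angle.

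\textbf{Step 1 (angle comparison from the lower bound).} From \Cref{rem:anglecomp} in the lower-bound case, for $(t,s)\in A_K$ we have $\tilde\measuredangle^{K}_p\gamma_0(t)\gamma_1(s)\ge\theta$ in the unsigned sense, since the inequality flips for two future-pointing curves. If $x_0\ll x_1$ already holds, we are done, so the difficulty is the case where a priori we do not know $x_0\le x_1$ and the comparison triangle does not exist. To handle this, I would use a continuity/connectedness argument along $\gamma_0$. The set $S=\{t\in(0,r_0]: \gamma_0(t)\ll x_1\}$ is nonempty, because $\gamma_0(t)\to p\ll x_1$ and $\ll$ is open in a comparison neighbourhood by continuity of $\tau$. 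Let $t^*=\sup S$, and suppose $t^*\le r_0$ with $\tau(\gamma_0(t^*),x_1)=0$ if $t^*$ is not attained.

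\textbf{Step 2 (hinge version of the lower bound and contradiction).} For $t\in S$ the triangle $p,\gamma_0(t),x_1$ is timelike, so the comparison angle is at least $\theta$. By monotonicity of the law of cosines (\Cref{rem:comparisonAngle}), a larger angle at $\bar p$ yields a larger opposite side, so
\[
\tau(\gamma_0(t),x_1)\ge \tau_{\mathbb{L}^2_K}\text{ of the hinge }(t,r_1,\theta).
\]
Since $t\le r_0$ and the model hinge $(r_0,r_1,\theta)$ is timelike (and remains timelike in $\mathbb{L}^2_K$ after small rescaling), this right-hand side is bounded below by a positive constant for $t\in(0,r_0]$. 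The precise lower bound is what requires $\bar a_i$ close to $O$: for small radii the $\mathbb{L}^2_K$ hinge differs from the Minkowski one by $O(|K|r^2)$, which does not destroy the strict inequality. Letting $t\to t^*$ and using continuity of $\tau$ gives $\tau(\gamma_0(t^*),x_1)>0$, so $t^*\in S$; openness then forces $t^*=r_0$. Hence $x_0\ll x_1$.

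\textbf{Main obstacle.} The main difficulty is Step 1/2 in the degenerate regime: the uniform positivity of the model side must hold for all $t\le r_0$ with a fixed $K$. For $K\ne0$ one has to check that the $\mathbb{L}^2_K$ hinge of angle $\theta$ with legs $t$ and $r_1$ stays timelike whenever the Minkowski one does. I would first try to establish this by scaling the points toward $O$, which is the reason for the "sufficiently close" hypothesis, rather than computing it exactly. A secondary concern is that "sufficiently close" must also guarantee that $\tau(p,x_1)<D_K$ and that all points lie in a single comparison neighbourhood.
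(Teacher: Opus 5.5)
Your route differs from the paper's, and it works once two sign slips are corrected.

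\textbf{Comparison with the paper.} The paper never invokes the curvature bound directly. It observes that in the cone $(\mu t,[\alpha])\ll(t,[\beta])$ exactly when $\mu<e^{-\omega}$, and then cites Proposition 3.2 of Beran--S\"amann to conclude $\alpha(\mu t)\ll\beta(t)$ for all small $t$. You instead extract from the lower bound the hinge inequality $\tau(\gamma_0(t),x_1)\ge h_K(t,r_1,\theta)$, where $h_K$ is the third side of the $\mathbb{L}^2_K$-hinge. You then propagate the timelike relation along $\gamma_0$ by your supremum/continuity argument. That argument correctly handles the fact that the comparison angle is only available once $\gamma_0(t)\ll x_1$ is known. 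Your version is self-contained and shows exactly where the lower bound enters; the paper's is shorter but outsources the key step.

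Both arguments only give the conclusion along rescalings $\lambda\bar a_0,\lambda\bar a_1$ with $\lambda\to 0$, which is all that \Cref{lem-distancesInConeConverge} needs. The uniform positivity you list as the main obstacle is routine:
\begin{itemize}
\item $h_0(t,r_1,\theta)^2=t^2-2tr_1\cosh\theta+r_1^2$ is decreasing on $(0,r_1e^{-\theta})\supset(0,r_0]$, so it is bounded below by $h_0(r_0,r_1,\theta)^2>0$.
\item By the scaling of the model spaces, $\lambda^{-1}h_K(\lambda t,\lambda r_1,\theta)=h_{\lambda^2K}(t,r_1,\theta)$, and this converges to $h_0(t,r_1,\theta)$ uniformly for $t\in[0,r_0]$.
\end{itemize}

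\textbf{The sign slips.} Both directional claims in your Steps 1 and 2 are reversed; your conclusion is correct only because the two errors cancel.
\begin{itemize}
\item By \Cref{rem:anglecomp}, a lower bound gives $\measuredangle^S_p\le\tilde\measuredangle^{K,S}_p$. For two future-directed curves from $p$ both signs are $-1$, so in the unsigned sense this reads $\tilde\measuredangle^K_p\gamma_0(t)x_1\le\theta$. This is the mirror image of the inequality the paper uses for upper bounds in \Cref{them:SoD_lengthspace}.
\item The opposite side is decreasing, not increasing, in the unsigned angle, as the flat expression $t^2+r_1^2-2tr_1\cosh\theta$ shows.
\end{itemize}
The correct chain is $\tau(\gamma_0(t),x_1)=h_K\bigl(t,r_1,\tilde\measuredangle^K_p\gamma_0(t)x_1\bigr)\ge h_K(t,r_1,\theta)$. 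This is precisely the inequality your bootstrap needs, so nothing else changes once both claims are corrected.
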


\begin{proof}    
    We will show for all timelike related points that are sufficiently close to the origin in \( T_p^+ \), their images under the exponential map are timelike related.
    Let \( T > 0 \) and \( \alpha, \beta: [0,T] \to X \) be two timelike geodesics parametrized by \( \tau \)-arclength such that \( \alpha(0) = \beta(0) = x \). Let \( \omega = \measuredangle_p(\alpha, \beta) \). For all \( 0 < t < T \), we define \( \alpha_t: [0,t] \to X \) to be the curve \( \alpha \) restricted to the smaller domain and define \( \beta_t \) similarly. Mapping these geodesics under the logarithm map yields points $\log_p(\alpha_t) = (t, [\alpha])$, $\log_p(\beta_t) = (t, [\beta]) \in T_p^+$. Let \( 0 < \mu < 1 \) and suppose \( (\mu t, [\alpha]) \ll (t, [\beta]). \) By the definition of \( \ll \) on \( T_p^+ \), this is equivalent to requiring:
    \[
        \bar{\tau} \big((\mu t, [\alpha]), (t, [\beta])\big) = \sqrt{\mu t^2 + t^2 - 2 \mu t^2 \cosh(\omega)} > 0,
    \]
    and \( 0 < \mu < 1 \). Hence, for any positive \( t \), $\alpha_{\mu t} \ll \beta_t$ if and only if $\mu < e^{-\omega}$.
    Since $\exp_p((\mu t, [\alpha])) = \alpha(\mu t)$ and  $\exp_p((t, [\beta])) = \beta(t)$, we may apply Proposition 3.2 from \cite{HyperbolicAnglesBeran_2023} and conclude that for all $ 0 < \mu < e^{-\omega} $ and sufficiently small  $t$, $\alpha(\mu t) \ll \beta(t)$.
\end{proof}

\begin{lemma}
\label{lem-distancesInConeConverge}
The exponential map is an almost isometry for points close to the origin.\\
More precisely, under the conditions of \Cref{def-log}, for any two timelike related points $\bar{a}_i = (r_i,[\gamma_i]) \ll \bar{a}_j = (r_j,[\gamma_j])\in T^+_p$, the points $a_i^\lambda := \exp_p (\lambda r_i, [\gamma_i])\in U\subset X$ satisfy
$$\lim_{\lambda\to 0}\lambda^{-1}\tau(a^\lambda_i, a^\lambda_j) = \bar{\tau}(\bar{a}_i,\bar{a}_j).$$
\end{lemma}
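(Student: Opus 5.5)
The plan is to reduce the claim to the definition of the angle $\measuredangle_p(\gamma_i,\gamma_j)$ as a limit of comparison angles, using the scaling behaviour of the comparison formula in \Cref{rem:comparisonAngle}. Write $\omega = \measuredangle_p(\gamma_i,\gamma_j)$, let $\gamma_i,\gamma_j$ be parametrised by $\tau$-arclength, and put $a_i^\lambda = \gamma_i(\lambda r_i)$, $a_j^\lambda=\gamma_j(\lambda r_j)$. Then $\tau(p,a_i^\lambda)=\lambda r_i$ and $\tau(p,a_j^\lambda)=\lambda r_j$ exactly. Since $\bar a_i\ll\bar a_j$ means $r_i^2+r_j^2-2r_ir_j\cosh\omega>0$ with $r_i<r_j$, we have $r_i/r_j<e^{-\omega}$. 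By the argument of \Cref{lem:a_0 ll a_1} (Proposition 3.2 of \cite{HyperbolicAnglesBeran_2023}), $a_i^\lambda\ll a_j^\lambda$ for all small $\lambda$. Hence $\triangle p\,a_i^\lambda a_j^\lambda$ is a timelike triangle in a comparison neighbourhood, and $(\lambda r_i,\lambda r_j)\in A_K$.

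Let $c_\lambda=\tau(a_i^\lambda,a_j^\lambda)$ and $\theta_\lambda=\tilde\measuredangle^K_p a_i^\lambda a_j^\lambda$. The next step is to invert the comparison angle formula for the side opposite $p$. For $K=0$ this gives exactly
\[
c_\lambda^2=\lambda^2\bigl(r_i^2+r_j^2-2r_ir_j\cosh\theta_\lambda\bigr).
\]
For $K\neq0$, the sine/cosine (or hyperbolic) expressions with arguments $s\lambda r_i$, $s\lambda r_j$, $sc_\lambda$ are Taylor expanded. Because $c_\lambda\le \tau(p,a_j^\lambda)=O(\lambda)$, all arguments are $O(\lambda)$. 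Writing the law of cosines as $\cos(sc)=\cos(sa)\cos(sb)+\sin(sa)\sin(sb)\cosh\theta$ and dividing by $\lambda^2$ then yields
\[
\lambda^{-2}c_\lambda^2=r_i^2+r_j^2-2r_ir_j\cosh\theta_\lambda+O(\lambda^2),
\]
uniformly as long as $\theta_\lambda$ stays bounded. This holds whenever $\omega<\infty$, which is implied by the timelike relation. Therefore
\[
\lim_{\lambda\to0}\lambda^{-1}c_\lambda=\sqrt{r_i^2+r_j^2-2r_ir_j\cosh\omega}=\bar\tau(\bar a_i,\bar a_j)
\]
provided $\theta_\lambda\to\omega$.

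The main obstacle is that $\omega$ is defined only as a $\limsup$ over $(t,s)\to0$, whereas we need actual convergence along the ray $(t,s)=(\lambda r_i,\lambda r_j)$. I would obtain this from the curvature bound through the monotonicity formulation (\Cref{def:MonotonLorCurvature}, using its equivalence with the triangle condition from \cite{secCurvBouns}). Under the bound, $\theta(t,s)$ is monotone in each variable, hence monotone along the diagonal ray, and bounded by $\omega$ (cf. \Cref{rem:anglecomp}). A monotone function on the ray has a limit, and monotonicity in both variables lets any $(t,s)\to0$ be dominated by points on the ray, so this limit equals the $\limsup$, namely $\omega$.

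If the hypotheses do not include a curvature bound, I would instead invoke the existence of angles as limits from \cite{HyperbolicAnglesBeran_2023} under the local uniqueness assumptions of \Cref{def-log}. In that case the statement should be read as holding wherever the angle is a genuine limit. Independence of the limit from $K$ then lets us use $K=0$ for the final identification.
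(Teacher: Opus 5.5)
Your proposal is correct and follows essentially the same route as the paper. You restrict to the ray $(\lambda r_i,\lambda r_j)$ and use the lemma that $\exp_p$ preserves $\ll$ near the origin to obtain a timelike comparison triangle for small $\lambda$. You then invert the $K=0$ law of cosines to get $\lambda^{-2}\tau(a_i^\lambda,a_j^\lambda)^2=r_i^2+r_j^2-2r_ir_j\cosh\theta_\lambda$, and pass to the limit via $\theta_\lambda\to\measuredangle_p(\gamma_i,\gamma_j)$. The paper simply asserts that the angle is a genuine limit, so that any $K$, in particular $K=0$, may be used. Your monotonicity justification that the limit along the ray equals the $\limsup$, and your Taylor expansion for $K\neq 0$, add care rather than change the argument.
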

\begin{proof} We first check that the limit exists. For points $\bar{a}_i, \ll \bar{a}_j$, the scaled down points are also timelike related $\bar{a}_i^\lambda = (\lambda r_i, [\gamma_i]) \ll \bar{a}_j^\lambda = (\lambda r_j, [\gamma_j])$ and hence, by \Cref{lem:a_0 ll a_1}, for all sufficiently small $\lambda$, $a_i^\lambda \ll a_j^\lambda$. Therefore, $\tau(a_i^\lambda,a_j^\lambda) > 0$ for sufficiently small $\lambda$. Moreover, by \Cref{rem:comparisonAngle}, we have that  
\[
\lim_{\lambda \to 0} \tilde{\measuredangle}^{0}_p \left(\gamma_i(\lambda r_i ), \gamma_j(\lambda r_j)\right) = \arcosh \left( \frac{\tau(p, a_i^{\lambda})^2 + \tau(p, a_j^{\lambda})^2 - \tau(a_i^{\lambda}, a_j^{\lambda})^2}{2\tau(p, a_i^{\lambda}) \tau(p, a_j^{\lambda})} \right).
\]
The proof is a simple application of \Cref{def:cone}, the previous equation and using the $K=0$ comparison angle formula (since the angle exists we can use any $K$ since they all agree. Therefore, we will use $K=0$ for ease of calculation).
    \begin{align*}
    \bar{\tau} (\bar{a}_i, \bar{a}_j) &= \sqrt{r_i^2 + r_j^2 - 2 r_i r_j \cosh \big( \measuredangle_p (\gamma_i, \gamma_j) \big)} \\
    &= \sqrt{r_i^2 + r_j^2 - 2 r_i r_j \cosh \left(\lim_{t,s\to 0} \tilde{\measuredangle}_p \left( \gamma_i(t),\gamma_j(s)\right)\right)} \\
    &= \sqrt{r_i^2 + r_j^2 - 2 r_i r_j \cosh \left(\lim_{\lambda \to 0} \tilde{\measuredangle}^{0}_p \left(\gamma_i(\lambda r_i ), \gamma_j(\lambda r_j)\right)\right)} \\
    &=\! \lim_{\lambda\to 0} \! \sqrt{\!r_i^2 + r_j^2 - 2 r_i r_j \cosh \left(\! \arcosh \left( \frac{\tau(p, a_i^{\lambda})^2 + \tau(p, a_j^{\lambda})^2 - \tau(a_i^{\lambda}, a_j^{\lambda})^2}{2\tau(p, a_i^{\lambda}) \tau(p, a_j^{\lambda})} \right)\right)} \\
    &= \lim_{\lambda \to 0} \sqrt{r_i^2 + r_j^2 - 2 r_i r_j   \frac{\lambda^2 r_i^2 + \lambda^2 r_j^2 - \tau(a_i^{\lambda}, a_j^{\lambda})^2}{2\lambda^2 r_i r_j} } \\
    &= \lim_{\lambda \to 0} \sqrt{\frac{1}{\lambda^2} \tau(a_i^{\lambda}, a_j^{\lambda})^2} \\
    &= \lim_{\lambda \to 0} \lambda^{-1} \tau(a_i^{\lambda}, a_j^{\lambda}).
\end{align*}
This completes the proof.
\end{proof}

\begin{lemma} \label{lem:TConeflat}
Under the conditions of \Cref{def-log} and an upper timelike sectional curvature bound $K\in\R$, the tangent cone $T_p^+$ 
has curvature bounded above in the sense of \Cref{def:LorfourPts} by zero for any $p\in X$.
\end{lemma}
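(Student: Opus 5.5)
The plan is a blow-up argument. We transfer the four-point condition for $K$ from $X$ to $T_p^+$ by rescaling, using \Cref{lem-distancesInConeConverge}, and then let the comparison curvature $\lambda^2 K$ of the rescaled space tend to $0$. By \Cref{lem:curvature_equivalence}, the hypothesis gives a four-point curvature bound $K$ on a comparison neighbourhood $U\ni p$.

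First I would prove the condition on the dense subcone $\mathcal{T}_p=\mathrm{Con}(\mathcal{D}_p^+)$. Fix four points $\bar a_1\ll\bar a_2\ll\bar a_3\ll\bar a_4$ in $\mathcal{T}_p$, where $\bar a_i=(r_i,[\gamma_i])$. Set $a_i^\lambda=\exp_p^+(\lambda r_i,[\gamma_i])$.
\begin{itemize}
\item By \Cref{lem:a_0 ll a_1}, applied pairwise, for all sufficiently small $\lambda$ these points satisfy $a_1^\lambda\ll a_2^\lambda\ll a_3^\lambda\ll a_4^\lambda$ and lie in $U$.
\item The four-point condition in $X$ then gives $\tau(a_2^\lambda,a_3^\lambda)\ge\tau(\tilde x_2^1,\tilde x_3^1)$ and $\tau(a_1^\lambda,a_4^\lambda)\le\tau(\tilde x_1^2,\tilde x_4^2)$. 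The comparison configurations are built in $\mathbb{L}^2_K$ from the remaining five time separations of the $a_i^\lambda$.
\item Multiply everything by $\lambda^{-1}$. Rescaling $\mathbb{L}^2_K$ by $\lambda^{-1}$ gives $\mathbb{L}^2_{\lambda^2K}$. So the inequalities say the rescaled separations $\lambda^{-1}\tau(a_i^\lambda,a_j^\lambda)$ satisfy the four-point inequalities relative to $\mathbb{L}^2_{\lambda^2K}$.
\end{itemize}

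Next, pass to the limit $\lambda\to0$. By \Cref{lem-distancesInConeConverge}, $\lambda^{-1}\tau(a_i^\lambda,a_j^\lambda)\to\bar\tau(\bar a_i,\bar a_j)$ for every pair. The comparison-distance $\tau(\tilde x_2^1,\tilde x_3^1)$ in $\mathbb{L}^2_{\kappa}$ is a function of $\kappa$ and the five input side lengths. It is obtained by gluing two triangles and measuring via the law of cosines, using the formulas in \Cref{rem:comparisonAngle} with angle additivity. That function is jointly continuous in $(\kappa,\text{side lengths})$ at $\kappa=0$, as long as the triangles are nondegenerate and strictly timelike. Taking limits therefore yields exactly the Minkowski four-point inequalities for $\bar a_1,\dots,\bar a_4$, and the same holds for the second configuration.

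Then I would extend to all of $T_p^+$ and check the regularity items of \Cref{def:LorfourPts}.
\begin{itemize}
\item The cone $\tau$ is given by an explicit continuous formula in $r_i$ and $\dd_\Sigma$. Hence $\tau$ is continuous and $\{\tau>0\}$ is open; since $D_0=\infty$, the whole cone serves as the neighbourhood.
\item Any four chronologically ordered points of $T_p^+$ are limits of points of $\mathcal{T}_p$, and for nearby approximants chronology is preserved by openness.
\item The four-point inequalities are closed conditions by continuity of the comparison functions, so they pass to $T_p^+$.
\end{itemize}

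The main obstacle I expect is the uniformity in the limit step. There are two concerns:
\begin{itemize}
\item The comparison configuration in $\mathbb{L}^2_{\lambda^2K}$ must exist. For $K<0$ we need the size bound $\lambda^{-1}\tau<D_{\lambda^2K}=\lambda^{-1}D_K$, which holds for small $\lambda$.
\item The comparison quantities must depend continuously on $\kappa$ near $0$. Degenerate cases, such as $\bar a_2,\bar a_3$ only causally related in a comparison picture, need care. I would handle these by first treating strictly timelike, nondegenerate configurations, then recovering degenerate ones by continuity.
\end{itemize}
A secondary issue is that \Cref{lem:a_0 ll a_1} is stated under a lower bound. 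I would instead invoke directly the cited Proposition 3.2 of \cite{HyperbolicAnglesBeran_2023} with existence of angles, which holds here by \Cref{rem:anglecomp}.
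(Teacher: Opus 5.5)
Your proposal follows the paper's proof: reduce to the dense subcone $\mathcal{T}_p$, apply the four-point condition to $a_i^\lambda=\exp_p^+(\lambda r_i,[\gamma_i])$ in $\mathbb{L}^2_K$, rescale the comparison picture to $\mathbb{L}^2_{\lambda^2K}$, and let $\lambda\to0$ using \Cref{lem-distancesInConeConverge}; you also make explicit the continuity of the comparison configuration in the curvature parameter, which the paper's limit arrows use without comment. One caveat on your added regularity check: with the cone metric, $\tau$ is not continuous at the vertex $O$ when $\Sigma_p^+$ is unbounded (the points $(1/n,y_n)$ with $\cosh \dd_\Sigma(y_n,y_0)=n$ converge to $O$ but are not causally related to $(1,y_0)$, although $\tau(O,(1,y_0))=1$), so that item only holds away from $O$---a point the paper does not address either.
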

\begin{proof}
Since $\mathcal{T}_p$ is dense in $T_p^+$, it is enough to check the four-point condition on $\mathcal{T}_p$.
Let $\Bar{a}_1\ll \Bar{a}_2 \ll \Bar{a}_3 \ll \Bar{a}_4$ be a timelike four-point configuration in $T_p^+$ where $\bar{a}_i = (r_i, [\gamma_i])$. Let $U \subset X$ be a curvature comparison neighbourhood and assume that $\exp_p(\bar{a}_i) = \gamma_i(r_i) = a_i \in U$. We define $a_i^\lambda = \exp_p(\lambda r_i,[\gamma_i])$ to be the rescaled points in $X$. Since $X$ has an upper curvature bound by $K$, for each $\lambda$ we may form the first of the two necessary comparison configurations $\tilde{a}^{\lambda,1}_i \in \mathbb{L}^2(K)$ of $a_i^\lambda$ such that all the distances are preserved except for $\tau(a_2^\lambda,a_3^\lambda)$. In fact, by \Cref{def:LorfourPts} we have that 
$\tau_X(a^{\lambda}_2,a^{\lambda}_3) \geq \Tilde{\tau}_K(\tilde{a}_2^{\lambda,1},\tilde{a}_3^{\lambda,1})$.\\
The comparison space can now be scaled up (together with the comparison construction $\tilde{a}_i^{\lambda,1}$) by a factor of $\frac{1}{\lambda}$, which reduces the curvature according to $\frac{1}{\lambda}\mathbb{L}^2(K) = \mathbb{L}^2(\lambda^2 K)$.

For all pairs $(i,j)\neq(2,3)$, we have due to \Cref{lem-distancesInConeConverge}
\begin{equation*}
    \Tilde{\tau}_0 (\Tilde{a}_i^1,\Tilde{a}_j^1) \hspace{-0.08cm} \xleftarrow{\lambda\to 0} \hspace{-0.08cm} \Tilde{\tau}_{\lambda^2 K} (\frac{1}{\lambda} \Tilde{a}^{\lambda,1}_i, \frac{1}{\lambda} \Tilde{a}^{\lambda,1}_j) = \frac{1}{\lambda}  \Tilde{\tau}_{K} (\Tilde{a}^{\lambda,1}_i, \Tilde{a}^{\lambda,1}_j) 
    = \frac{1}{\lambda}  \tau_{X} (a^{\lambda}_i, a^{\lambda}_j)
    \hspace{-0.08cm} \xrightarrow{\lambda\to 0} \hspace{-0.08cm} \Bar{\tau}(\Bar{a}_i, \Bar{a}_j).
\end{equation*}
This means that we get a flat comparison neighbourhood (in $\R^{1,1}$) for the original four point construction $\Bar{a}_i$. For $(i,j)=(2,3)$ we get due to the four point inequality:
\begin{equation*}
    \Tilde{\tau}_0 (\Tilde{a}_2^1,\Tilde{a}_3^1) \hspace{-0.08cm} \xleftarrow{\lambda\to 0} \hspace{-0.08cm} \Tilde{\tau}_{\lambda^2 K} (\frac{1}{\lambda} \Tilde{a}^{\lambda,1}_2, \frac{1}{\lambda} \Tilde{a}^{\lambda,1}_3) = \frac{1}{\lambda}  \Tilde{\tau}_{K} (\Tilde{a}^{\lambda,1}_2, \Tilde{a}^{\lambda,1}_3) 
    \leq \frac{1}{\lambda}  \tau_{X} (a^{\lambda}_2, a^{\lambda}_3) \hspace{-0.08cm}\xrightarrow{\lambda\to 0} \hspace{-0.08cm} \Bar{\tau}(\Bar{a}_2, \Bar{a}_3).
\end{equation*}
Hence, we can conclude that the inequality holds in the limit and we get $\Tilde{\tau}_0 (\tilde{a}^{1}_2,\Tilde{a}^{1}_3) \leq \Bar{\tau} (\Bar{a}_2,\Bar{a}_3)$, making $T_p^+$ a space of curvature bounded above by $0$. The proof for the other four point comparison configurations comparing $\Tilde \tau_K(\tilde{a}^{\lambda,2}_1,\tilde{a}^{\lambda,2}_4)$ works analogously. 
\end{proof}

\subsection{Curvature bounds for the space of direction}

\begin{lemma} \label{lem:Ycba-1}
Let $(Y,\dd_Y)$ be a length space with a complete metric and $X=Con(Y)$ the Minkowski cone over $Y$. If $X$ is $\mu$-$\epsilon$ curvature bounded above by $0$, then $Y$ is a geodesic space of curvature bounded above by $-1$.
\end{lemma}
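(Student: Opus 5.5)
Since $Y$ is assumed to be a complete length space, I will split the argument in two: first the curvature bound $\le -1$ for $Y$ in the metric $\epsilon$-$\mu$ sense (the Riemannian analogue of Definition~\ref{def:epsmuCBA}, cf.\ \cite[Def.~9.1.2]{burago2001course}), and then geodesicity by an approximate-midpoint argument. The dictionary I will use throughout is the following. For $y_1,y_2\in Y$ with $\dd_Y(y_1,y_2)$ small and $r_1,r_2>0$, the cone formula
\[
\tau\bigl((r_1,y_1),(r_2,y_2)\bigr)^2=r_1^2+r_2^2-2r_1r_2\cosh \dd_Y(y_1,y_2)
\]
is exactly the Minkowski law of cosines. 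A triangle $\triangle O x_1 x_2$ in $X$ with apex $O$ therefore has a flat comparison triangle in $\R^{1,1}$ whose angle at $\bar O$ equals $\dd_Y(y_1,y_2)$. Likewise, a triangle $\triangle y_1y_2y_3$ in $Y$ with small sides corresponds to the rays through the cone, and its $\mathbb H^2$ comparison triangle is the set of directions of the $\R^{1,2}$ comparison cone.

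\textbf{Curvature bound.} Take $y_1,y_2,y_3\in Y$ close together and an $\epsilon$-midpoint $m$ of $y_1,y_2$ in $Y$, which exists because $Y$ is a length space. First I lift to the cone, choosing radii so that the points are timelike: put $x_1=(1,y_1)$ and $x_2=(R,y_2)$ with $R$ large enough that $x_1\ll x_2$, which is possible since $\cosh \dd_Y(y_1,y_2)$ is close to $1$. Let $z$ be the point on the flat comparison segment $\bar x_1\bar x_2$ whose direction from $\bar O$ is the hyperbolic midpoint of the angle, and let $\mu$ be its parameter. Then $(r_z,m)$ with $r_z=|z|$ is an $\epsilon'$-$\mu$ midpoint of $x_1,x_2$ in $X$, where $\epsilon'\to0$ as $\epsilon\to0$ by continuity of the cone formula. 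I take the third vertex $x_3=(s,y_3)$ with $s$ chosen so that $x_3$ is timelike related to $x_1$, $x_2$ and the midpoint. This is possible: the points near the apex are in the causal past, and for $s$ large $x_3$ lies in the future of all of them. The $\epsilon$-$\mu$ bound with $K=0$ then yields $\tau(\text{midpoint},x_3)\ge\tau(\bar m_\mu,\bar x_3)-f_\mu(\epsilon')$ (or the reversed relation if $x_3$ is in the past). This is a statement about a flat triangle containing $\bar O$ only implicitly, so I next compare with the model $\R^{1,2}=\mathrm{Con}(\mathbb H^2)$. In $\mathrm{Con}(\mathbb H^2)$ the four points $O,x_1,x_2,x_3$ have a canonical realization from a comparison triangle $\triangle\bar y_1\bar y_2\bar y_3\subset\mathbb H^2$. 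The flat comparison triangle for $x_1,x_2,x_3$ coincides with the plane section through these three model points, and the model midpoint lies on the ray of the $\mathbb H^2$-midpoint $\bar m$. Since $\tau$ at fixed radii is strictly decreasing in $\dd_Y$, the $\tau$ inequality translates into $\dd_Y(m,y_3)\le \dd_{\mathbb H^2}(\bar m,\bar y_3)+g(\epsilon)$. To get every $\mu$-point of $y_1y_2$, not just the midpoint, I vary the radius ratio.

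\textbf{Geodesicity.} I will show that $\epsilon$-midpoints of $y_1,y_2$ form a Cauchy net as $\epsilon\to0$. This is the standard CAT argument: apply the bound just proven with two $\epsilon$-midpoints $m,m'$ playing the role of vertices, which forces $\dd_Y(m,m')\to0$ by strict convexity of $\mathbb H^2$ (compare \cite[Lemma~9.1.x]{burago2001course}). Completeness of $Y$ then gives exact midpoints. Iterating on dyadic points and completing produces geodesics, at least locally and in the curvature neighbourhood, which I then propagate via the length structure.

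\textbf{Main obstacle.} The hard step is the careful choice of radii so that all relevant points in $X$ are timelike related and lie within a single comparison neighbourhood, together with checking that the flat comparison of the three lifted points really equals the planar section of $\mathrm{Con}(\mathbb H^2)$. If the latter fails, I would fall back on applying the curvature bound to triangles with vertex $O$ and using Lemma~\ref{lem:curvature_equivalence} to pass between four-point and triangle formulations.
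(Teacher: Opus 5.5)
Your curvature-bound step follows the paper's proof. You lift with radii chosen so that the $\mu$-point of the Minkowski segment lies on the ray of the $\mathbb H^2$-midpoint; this is the paper's $r_m=2r_2r_3\cosh(\dd_{23}/2)/(r_2+r_3)$ with $\mu=r_2/(r_2+r_3)$. You then apply the $\epsilon$-$\mu$ bound and convert back using that $\tau$ decreases in $\dd_Y$ at fixed radii. Your continuity argument for $\epsilon'\to 0$ is cleaner than the paper's first-order expansion. The identification of the flat comparison triangle with a plane section of $\R^{1,2}$, which you list as an obstacle, is immediate: an affine plane containing a timelike vector is Lorentzian, and the side lengths agree because $\dd_Y(y_i,y_j)=\dd_{\mathbb H^2}(\bar y_i,\bar y_j)$. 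Your geodesicity step, via Cauchy approximate midpoints and completeness, is also the paper's.

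The gap is in how you handle locality. You restrict to triples that are ``close together''. You leave open whether all lifted points lie in one comparison neighbourhood. You then get midpoints only locally and plan to ``propagate via the length structure'', and that last step fails. A complete length space in which nearby points have midpoints need not be geodesic. Take two points joined by edges of lengths $1+1/n$, $n\in\mathbb N$: the space is complete and locally a tree, so locally of curvature $\le -1$. Yet the distance $1$ between the two vertices is not attained. So a local bound on $Y$ cannot give the conclusion.

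The missing observation is the scale invariance of the cone. The dilation $(r,y)\mapsto(\lambda r,y)$ multiplies $\tau$ by $\lambda$ and preserves $\ll$. It maps $\epsilon'$-$\mu$ midpoints to $\lambda\epsilon'$-$\mu$ midpoints, and the $K=0$ comparison configuration rescales the same way. Some comparison neighbourhood $U$ contains the apex, hence a ball $\{r<\delta\}$. So you may multiply all radii by a small $\lambda$ to place the whole configuration in $U$. Dividing the resulting inequality by $\lambda$ gives the comparison with error $\lambda^{-1}f(\lambda\epsilon')\to 0$.

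This settles your ``main obstacle''. More importantly, it gives the comparison inequality for every triangle in $Y$. No smallness of $\dd_Y$ is needed anywhere: for instance, $x_1\ll x_2$ only requires $R>e^{\dd_Y(y_1,y_2)}$. With this global bound, your Cauchy argument produces a midpoint for every pair $y_1,y_2$. Completeness plus midpoints then gives geodesicity directly by \cite[Thm.~2.4.16]{burago2001course}, with no local-to-global step.
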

\begin{proof}
We first prove that $Y$ has curvature bounded above by $-1$.
For this, take three points $y_1,y_2,y_3\in Y$ and some $\epsilon$ midpoint $y_{m\epsilon}$ between $y_2$ and $y_3$.\\
Let $\bar{y}_i$ be a comparison triangle in $\mathbb{H}^2\subset \R^{1,2}$ and $\bar{y}_m$ the (real) midpoint of $\bar y_2$ and $\bar y_3$.\\
We now choose three radii $r_i>0$ such that $\bar x_i \mDef r_i \cdot \bar y_i$ satisfy $\bar x_1 \ll \bar x_2 \ll \bar x_3$ and set 
\begin{equation*}
r_m \mDef \frac{2 r_2 r_3 \cosh \frac{\dd(y_2,y_3)}{2}}{r_2+r_3}\,.    
\end{equation*}
With this setup, $\bar x_m = r_m \cdot \bar y_m$ becomes a $\mu=\frac{r_2}{r_2+r_3}$-midpoint of $\bar x_2$ and $\bar x_3$.\\
The same radii can also be used to define the points $x_i\mDef (r_i,y_i)\in X$. Since the distances $\dd(y_i,y_j)$ and $\dd(\bar y_i,\bar y_j)$ and the radii agree, we find that $\tau(x_i,x_j) = \tau(\bar x_i,\bar x_j)$, which means that $\triangle \bar x_1 \bar x_2 \bar x_3\in \R^{1,1}\subset \R^{1,2}$ is a comparison triangle of $\triangle  x_1  x_2  x_3\in X$.\\
A calculation shows that $x_{m \epsilon}$ is a $\mu$-$\epsilon'$ midpoint of $x_2$ and $x_3$ using the notation:
\begin{align*}
    \epsilon' := \epsilon\cdot\frac{r_2 r_3 \sinh (\dd_{23})}{\sqrt{r_2^2+r_3^2 -2 r_2 r_3 \cosh (\dd_{23})}}\ +\ O\left(\epsilon ^2\right)\,,\\
    \mu := \frac{r_2} {r_2+r_3}\quad\text{ ,  }\quad \dd_{ij} = \dd(y_i,y_j)\quad\text{ ,  } \quad\dd_{im}=\dd(y_i,y_{m\epsilon}) \, .
\end{align*}
Then,
\begin{align*}
    &\null \tau(x_2,x_{m\epsilon})- \mu \tau(x_2,x_3) \\
    &= \sqrt{r_2^2+4\frac{r_2^2r_3^2 \cosh^2(\frac{\dd_{23}}{2})}{(r_2^2+r_3^2)^2} - 4\frac{r_2^2 r_3 \cosh(\frac{\dd_{23}}{2})}{r_2+r_3} \cosh(\dd_{2m})} - \\
    &\qquad \qquad \mu \sqrt{r_2^2 + r_3^2- 2 r_2 r_3 \cosh(\dd_{23})}\\
    &\geq \mu\Big(\sqrt{(r_2+r_3)^2 + 4 r_3^2 \cosh^2(\frac{\dd_{23}}{2}) - 4 (r_2+r_3) r_3 \cosh(\frac{\dd_{23}}{2}) \cosh(\frac{1}{2}\dd_{23}+\epsilon)} \\
    &\qquad \qquad - \sqrt{r_2^2 + r_3^2- 2 r_2 r_3 \cosh(\dd_{23})}\Big)\\
    &\geq \mu\Bigg(\Big(r_2^2+3r_3^2 + 2 r_3^2 \cosh(\dd_{23}) - 4 (r_2+r_3) r_3  \big(\frac{\cosh(\dd_{23})+1}{2} \cosh(\epsilon) \\
    &\qquad \qquad + \frac{\sinh(\dd_{23})}{2} \sinh(\epsilon)\big) \Big)^{1/2}  - \sqrt{r_2^2 + r_3^2- 2 r_2 r_3 \cosh(\dd_{23})}\Bigg)\\
    &
    \left.
    \begin{aligned}
    \geq \mu\Big(    
    \sqrt{r_2^2+3 r_3^2 + 2 r_3^2 \cosh(\dd_{23}) 
     - 2 (r_2+r_3) r_3  \big(\cosh(\dd_{23}+\epsilon) + \cosh(\epsilon)\big)} \\
    \qquad\qquad
    - \sqrt{r_2^2 + r_3^2 - 2 r_2 r_3 \cosh(\dd_{23})} \Big)
    \end{aligned}
    \ \right\}
    \quad = (*)
   \end{align*}
   For $\epsilon=0$, we observe:
   \begin{align*}
   \left.(*)\right|_{\epsilon=0}
   &=\mu\Big(\sqrt{r_2^2+3 r_3^2 + 2 r_3^2 \cosh(\dd_{23})  - 2 (r_2+r_3) r_3  \big(\cosh(\dd_{23}) + 1\big) } \\
   & \qquad \qquad- \sqrt{r_2^2 + r_3^2- 2 r_2 r_3 \cosh(\dd_{23})}\Big)\\
    &= \mu\Big(\sqrt{r_2^2 + r_3^2 - 2 r_2 r_3 \cosh(\dd_{23})   } - \sqrt{r_2^2 + r_3^2- 2 r_2 r_3 \cosh(\dd_{23})}\Big) \\
    &=0\,.\\
   \end{align*}
   The first derivative in $\epsilon$ at $\epsilon=0$ is:
   \begin{align*}
       \left.\partial\epsilon(*)\right|_{\epsilon=0}
       &=\mu \frac{- r_3 (r_2+r_3) \, \sinh(\dd_{23})}{ \sqrt{r_2^2 + r_3^2- 2 r_2 r_3 \cosh(\dd_{23})}}\, .
   \end{align*}
    Hence, expanding $(*)$ in $\epsilon$ in first order at $\epsilon=0$, we obtain
    \begin{equation*}
        \tau(x_2,x_{m\epsilon})- \mu \tau(x_2,x_3) \geq - \epsilon \cdot\frac{r_2 r_3  \sinh (\dd_{23})}{\sqrt{r_2^2+r_3^2 -2 r_2 r_3 \cosh (\dd_{23})}}+O\left(\epsilon ^2\right)\, .
    \end{equation*}
    If we, in the first step, approximate $\dd_{2m} \geq \frac{1}{2}\dd_{23}-\epsilon$ instead of $\dd_{2m} \leq \frac{1}{2}\dd_{23}+\epsilon$, we obtain,
    \begin{equation*}
        |\tau(x_2,x_{m\epsilon})- \mu \tau(x_2,x_3)| \leq \epsilon \cdot\frac{r_2 r_3  \sinh (\dd_{23})}{\sqrt{r_2^2+r_3^2 -2 r_2 r_3 \cosh (\dd_{23})}}+O\left(\epsilon ^2\right)\, .
    \end{equation*}
    By an almost identical calculation, we also get  
    \begin{equation*}
        |\tau(x_{m\epsilon},x_3)- (1-\mu) \tau(x_2,x_3)| \leq  \epsilon \cdot\frac{r_2 r_3  \sinh (\dd_{23})}{\sqrt{r_2^2+r_3^2 -2 r_2 r_3 \cosh (\dd_{23})}}+O\left(\epsilon ^2\right)\, .
    \end{equation*}
    This shows that $x_{m\epsilon}$ is a $\mu-\epsilon'$ midpoint of $x_2$ and $x_3$.\\
Using the $\epsilon'$-$\mu$ upper curvature bound on $X$, we can conclude that for some $f:\R_{\geq0} \to \R_{\geq0}$ with $\lim _{\epsilon'\to 0} f(\epsilon')=0$ we have
\begin{equation}\label{equ:inequ1}
  \sqrt{r_1^2 + r_m^2-2 r_1 r_m \cosh(\dd(y_1,y_{m \epsilon}))} \geq \sqrt{r_1^2 + r_m^2-2 r_1 r_m \cosh(\dd(\bar y_1,\bar y_{m}))} -f(\epsilon').
\end{equation}
Hence,
$$\dd(y_1,y_{m \epsilon}) \leq \dd(\bar y_1,\bar y_{m}) + \tilde{f}(\epsilon)$$ 
for some $\tilde{f}$ with $\lim_{\epsilon\to 0} \tilde{f}(\epsilon) =0$.

In the second part of the proof, we will show that $Y$ is a geodesic space. The argument for this is not new and can be found, for example, in 9.13 of \cite{burago2001course}.

Since $Y$ is a length space, for any two points $y_1, y_2 \in Y$ there exists a sequence $(m_k)$ of $\frac{1}{k}$-midpoints. For $k < l \in \mathbb{N}$, consider the triangle $\triangle y_1 y_2 m_k$ and apply triangle comparison to the triangle $\triangle y_1 y_2 m_l$. Let $\bar{d}_k$ be the midpoint in the comparison triangle $\triangle \bar{y}_1 \bar{m}_k \bar{y}_2$. Then we have
$$\dd(m_k, m_l) \leq \dd(m_k, \bar{d}_k) + f\left(\tfrac{1}{l}\right),$$
where $f\left(\tfrac{1}{l}\right) \to 0$ as $l \to \infty$. It remains to estimate $\dd(m_k, \bar{d}_k)$. Observe that $m_k$ lies in the intersection of the two balls of radius $\frac{1}{k} + \frac{1}{2}\dd(y_1, y_2)$ centred at $y_1$ and $y_2$. Since $\dd(y_1, y_2)$ is fixed and $\frac{1}{k} \to 0$, the diameter of this intersection goes to zero, hence $\dd(m_k, \bar{d}_k) \to 0$ as $k \to \infty$.

Thus, the sequence $(m_k)$ is Cauchy. Since $Y$ is complete, there exists a limit point $ \lim_{k\to\infty} m_k = m \in Y$, which by construction lies at equal distance from $y_1$ and $y_2$, and hence is a true midpoint. This proves that $Y$ is a geodesic space.
\begin{center}
    \begin{tikzpicture} 
        \foreach \n in {1,2,...,20} 
        \draw[thin] (0,0) to[out=90.0 / \n, in=180 - 90.0 / \n] (4,0);
         \fill (0,0) circle (1.5pt);
                \node[left] at (0,0) {$y_1$};
        \fill (4,0) circle (1.5pt);
                \node[right] at (4,0) {$y_2$};
        \fill[red] (2,0.82) circle (1.5pt);
                \node[above right] at (2,0.75) {\textcolor{red}{$m_k$}};
        \fill[red] (2,0.45) circle (1.5pt);
                \node[above right] at (2,0.25) {\textcolor{red}{$m_l$}};
\end{tikzpicture}
\end{center}

\end{proof}

\begin{theorem} \label{thm:SoDcba-1}
    Let \Xll be a \LpLS \, satisfying the conditions of \Cref{def-log} and timelike sectional curvature bounded below by $K \in \mathbb{R}$, then the space of directions at every point $p \in X$, $\Sigma_p^+$ has sectional curvature bounded above by $-1$.
\end{theorem}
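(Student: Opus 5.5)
The plan is to pass the curvature information from $X$ to the tangent cone $T_p^+=\mathrm{Con}(\Sigma_p^+)$ by blowing up at $p$, and then push it down to the base $\Sigma_p^+$ via \Cref{lem:Ycba-1}. The hypotheses of the statement, namely the conditions of \Cref{def-log} together with a timelike curvature bound \emph{below} by $K$, are precisely those of \Cref{lem:a_0 ll a_1}. So the first step is to invoke that lemma: for $\bar a_0\ll\bar a_1$ in $T_p^+$ close enough to the origin, $\exp_p(\bar a_0)\ll\exp_p(\bar a_1)$. This makes every rescaled configuration $a_i^\lambda=\exp_p(\lambda r_i,[\gamma_i])$ of a timelike chain $\bar a_1\ll\dots\ll\bar a_4$ in $\mathcal{T}_p$ again a timelike chain in a comparison neighbourhood $U$ for all small $\lambda$. \Cref{lem-distancesInConeConverge} then gives $\lambda^{-1}\tau(a_i^\lambda,a_j^\lambda)\to\bar\tau(\bar a_i,\bar a_j)$ for every pair.

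The second step repeats the rescaling argument of \Cref{lem:TConeflat} under the present hypothesis. For each $\lambda$ I build the comparison four-point configuration of $(a_i^\lambda)$ in $\mathbb{L}^2_K$ and scale it by $\lambda^{-1}$, which turns it into a configuration in $\mathbb{L}^2_{\lambda^2K}$. As $\lambda\to0$ this converges to a configuration in $\R^{1,1}$ whose prescribed side lengths are the cone distances $\bar\tau(\bar a_i,\bar a_j)$. The inequality supplied by the curvature hypothesis at scale $\lambda$ survives the limit and becomes the corresponding four-point inequality for $(\bar a_i)$ with $K=0$. Density of $\mathcal{T}_p$ in $T_p^+$ and continuity of the cone $\tau$ (explicit from \Cref{def:cone}) extend it to all of $T_p^+$.

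This is the step I expect to be the main obstacle. \Cref{lem:Ycba-1} needs the cone to be bounded \emph{above} by $0$, i.e.\ the upper four-point inequalities $\tau(x_2,x_3)\ge\tilde\tau(\tilde x_2^1,\tilde x_3^1)$ and $\tau(x_1,x_4)\le\tilde\tau(\tilde x_1^2,\tilde x_4^2)$. A lower bound in the sense of \Cref{def:LorfourPts} instead gives the opposite-type inequality. To connect the stated hypothesis to what is needed, I would first try to read the ``bounded below'' hypothesis through the signed-angle monotonicity convention of \Cref{def:MonotonLorCurvature}. By the remark following that definition, the unsigned angles for two future-directed curves behave in the opposite way. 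I would check carefully whether, under this reading, the hypothesis yields exactly the upper-type four-point inequalities (via \Cref{lem:curvature_equivalence}) that \Cref{lem:TConeflat} consumes. If that identification fails, this step is where the argument would break: the rescaling would only show that $T_p^+$ is bounded \emph{below} by $0$.

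Assuming the identification goes through, $T_p^+$ has curvature $\le0$ in the four-point sense. The third step converts this to the $\epsilon$-$\mu$ sense using the implication \ref{enum:curvature2}$\Rightarrow$\ref{enum:curvature3} of \Cref{lem:curvature_equivalence}. By the remark after that lemma, this implication needs no existence of geodesics, so it applies in the cone without knowing that $T_p^+$ is geodesic.

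The final step applies \Cref{lem:Ycba-1} with $Y=\Sigma_p^+$. Its hypotheses hold: $\Sigma_p^+$ is complete, being a metric completion, and it is a length space by \Cref{them:SoD_lengthspace}. That lemma then yields that $\Sigma_p^+$ is geodesic with curvature bounded above by $-1$, which is the claim.
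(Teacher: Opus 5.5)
Your chain of steps is the paper's proof. It uses \Cref{them:SoD_lengthspace} and completeness for the base, and the blow-up of \Cref{lem:TConeflat} (through \Cref{lem:a_0 ll a_1} and \Cref{lem-distancesInConeConverge}) for the four-point bound $\le 0$ on $T_p^+$. It then passes to the $\epsilon$-$\mu$ condition via \Cref{lem:curvature_equivalence} and applies \Cref{lem:Ycba-1} with $Y=\Sigma_p^+$. The obstacle you flag is genuine, but the paper does not overcome it by any reinterpretation. Its proof simply invokes \Cref{lem:TConeflat} and \Cref{them:SoD_lengthspace}, both of which are stated for an \emph{upper} timelike bound, and the abstract and introduction announce the result under upper bounds. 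So the ``below'' in the statement has to be read as ``above''. With that reading your argument is complete and coincides with the paper's, and you should say so explicitly rather than leave the step conditional.

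The signed-angle rescue you sketch would not work. The sign $\sigma=-1$ for two future-directed curves is already built into \Cref{def:MonotonLorCurvature}. The remark after it only records that unsigned comparison angles increase where signed ones decrease. A lower bound in that sense still corresponds to the triangle condition $\tau(x,m)\le\tau_{\mathbb{L}^2_K}(\bar x,\bar m)$, i.e.\ to the lower four-point condition (\Cref{lem:curvature_equivalence_lower}). Under a literal lower bound the rescaling therefore yields only curvature $\ge 0$ on $T_p^+$, as you feared. Your last step fails too: \Cref{them:SoD_lengthspace} rests on $\measuredangle_p\le\tilde\measuredangle^K_p$ from \Cref{rem:anglecomp}, which holds only under upper bounds. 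A lower bound gives the reverse inequality and does not even guarantee finite angles, so \Cref{them:SoD_lengthspace} cannot be invoked.

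Conversely, once you assume an upper bound, note that your first step cites \Cref{lem:a_0 ll a_1}, which is stated with a lower bound. Its conclusion, however, needs only $\omega=\measuredangle_p(\alpha,\beta)<\infty$ and continuity of $\tau$ near $p$. Let $s^*(t)$ be the supremum of those $s$ with $\alpha(s)\ll\beta(t)$. Then $\tau(\alpha(s),\beta(t))\to0$ as $s\uparrow s^*(t)$, so the comparison angles tend to $|\ln(t/s^*(t))|$. The $\limsup$ defining $\omega$ bounds this by $\omega+\delta$ once $t$ and $s^*(t)$ are small (if $s^*(t)$ is not small there is nothing to prove). Hence $s^*(t)>\mu t$ whenever $\mu<e^{-\omega}$, and the lemma remains available under the upper bound.
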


\begin{proof}
    The proof is a simple application of the main results of the paper. 
    By \Cref{them:SoD_lengthspace}, the future space of directions $\Sigma_p^+$ is a length space and the future tangent cone $T_p^+$ is a \LpLS. Now, by \Cref{lem:TConeflat}, $T_p^+$ has curvature bounded above by $0$ in the sense of the Lorentzian four point condition (\Cref{def:LorfourPts}). This is equivalent to $T_p^+$ having curvature bounded above by $0$ in the one-sided $\epsilon-\mu$ triangle condition by \Cref{lem:curvature_equivalence}. Finally, applying \Cref{lem:Ycba-1} to $T_p^+ = Con(\Sigma_p^+)$ implies that $\Sigma_p^+$ is a geodesic space of curvature bounded above by $-1$. 
\end{proof}

\newpage
\printbibliography

\end{document}